\documentclass[12pt]{amsart}
\usepackage{amssymb}
\usepackage[margin=1in]{geometry}

\title{Weak containment by restrictions of induced representations}
\author{Matthew Wiersma}
\address{Department of Mathematical and Statistical Sciences, University of Alberta, Edmonton, AB, Canada}
\email{mwiersma@ualberta.ca}

\newtheorem{theorem}{Theorem}[section]
\newtheorem{corollary}[theorem]{Corollary}
\newtheorem{prop}[theorem]{Proposition}
\newtheorem{lemma}[theorem]{Lemma}

\theoremstyle{remark}
\newtheorem{remark}[theorem]{Remark}
\newtheorem{claim}{Claim}

\newenvironment{claimproof}[1]{\par\noindent\emph{Proof of Claim}.\space#1}{\hfill $\blacksquare$}

\theoremstyle{definition}

\newtheorem{example}[theorem]{Example}

\newcommand{\fn}{\!:}
\newcommand{\C}{\mathbb C}
\newcommand{\R}{{\mathbb R}}
\newcommand{\N}{{\mathbb N}}
\newcommand{\Hi}{\mathcal{H}}
\newcommand{\lla}{\left\langle}
\newcommand{\rra}{\right\rangle}
\newcommand{\mc}{\mathcal}

\newcommand{\tn}{\textnormal}

\newcommand{\Z}{\mathbb Z}

\newcommand{\F}{\mathbb{F}}
\newcommand{\supp}{\mathrm{supp}}
\newcommand{\Ind}{\mathrm{Ind}}
\newcommand{\mr}{\mathrm}
\newcommand{\sgn}{\mathrm{sgn}}

\newcommand{\B}{\mc B}

\begin{document}

\begin{abstract}
A {\it QSIN group} is a locally compact group $G$ whose group algebra $\mr L^1(G)$ admits a quasi-central bounded approximate identity. Examples of QSIN groups include every amenable group and every discrete group. It is shown that if $G$ is a QSIN group, $H$ is a closed subgroup of $G$, and $\pi\fn H\to\B(\Hi)$ is a unitary representation of $H$, then $\pi$ is weakly contained in $(\Ind_H^G\pi)|_H$. This provides a powerful tool in studying the C*-algebras of QSIN groups. In particular, it is shown that if $G$ is a QSIN group which contains a copy of $\F_2$ as a closed subgroup, then $\mr C^*(G)$ is not locally reflexive and $\mr C^*_r(G)$ does not admit the local lifting property. Further applications are drawn to the ``(weak) extendability'' of Fourier spaces $\mr A_\pi$ and Fourier-Stieltjes spaces $\mr B_\pi$.
\end{abstract}

\maketitle

\section{Introduction}

Let $G$ be a locally compact group, and $\pi\fn G\to \mc B(\Hi_\pi)$ and $\sigma\fn G\to \mc B(\Hi_\sigma)$ be (continuous unitary) representations of $G$. There are three related notions for what it means for $\pi$ to contain the representation $\sigma$. The most basic of these is when $\sigma$ is unitarily equivalent to a subrepresentation of $\pi$, but this notion is too strong for many purposes from the perspective of operator algebras. The other two notions of containment are quasi-containment and weak containment. These are the appropriate versions of containment to consider from the perspectives of von Neumann algebras and C*-algebras, respectively. Indeed, if we define $\mr{VN}_\pi:=\pi(G)''\subset \mc B(\Hi_\pi)$ and $\mr C^*_\pi:=\overline{\pi(\mr L^1(G))}^{\|\cdot\|}\subset \mc B(\Hi_\pi)$, then the identity map on $G$ extends to a (normal) $*$-homomorphism $\mr{VN}_\pi\to\mr{VN}_\sigma$ if and only if $\pi$ quasi-contains $\sigma$, and the identity map on $\mr L^1(G)$ extends to a $*$-homomorphism $\mr C^*_\pi\to \mr C^*_\sigma$ if and only if $\pi$ weakly contains $\sigma$.

Suppose that $H$ is a closed subgroup of a locally compact group $G$, and $\pi\fn H\to \mc B(\Hi_\pi)$ is a representation of $H$. The main result in this paper addresses the question of when the restriction of the induced representation $(\Ind_H^G\pi)|_H$ weakly contains $\pi$. When $G$ is a discrete group, it is a straightforward exercise to check that $\pi$ is unitarily equivalent to a subrepresentation of $(\Ind_H^G\pi) |_H$ for every subgroup $H$ of $G$ and representation $\pi$ of $H$. In 1979 Cowling and Rodway showed that the restriction of the Fourier-Stieltjes algebra of $G$ to $H$, $\mr B(G)|_H$, is equal to $\mr B(H)$ whenever $G$ is a SIN group (see \cite[Theorem 2]{cr}). An examination of their proof reveals that Cowling and Rodway actually established the following result, from which their previously stated theorem follows immediately.
\begin{theorem}[Cowling-Rodway \cite{cr}]\label{CR}
	Let $G$ be a SIN group and $H$ a closed subgroup of $G$. If $\pi\fn H\to \B(\Hi_\pi)$ is a representation of $H$, then $\pi$ is quasi-contained in $(\Ind_H^G\pi)|_H$.
\end{theorem}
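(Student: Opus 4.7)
The plan is to use the SIN hypothesis to construct, for each conjugation-invariant neighborhood $U$ of $e_G$ in a suitable neighborhood basis, an explicit $H$-intertwining operator $T_U \fn \Hi_\pi \to \Hi_{\Ind \pi}$ whose modulus $|T_U|$ lies in the center of $\mr{VN}_\pi$. Polar decomposition will then produce $H$-equivariant partial isometries that embed central subrepresentations of $\pi$ into $(\Ind_H^G \pi)|_H$, and as $U$ shrinks these will sweep out all of $\pi$.

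Realize $\Ind_H^G \pi$ on the Hilbert space of $H$-equivariant $L^2$-sections $f \fn G \to \Hi_\pi$ satisfying $f(gh)=\pi(h)^{-1}f(g)$. Recalling that a SIN group is automatically unimodular and admits a basis of symmetric, relatively compact, conjugation-invariant open neighborhoods $U$ of $e_G$, set
\begin{equation*}
(T_U v)(g) := \mu(U)^{-1/2}\int_H \mathbf{1}_U(gh)\,\pi(h)v\,dh, \qquad v\in\Hi_\pi.
\end{equation*}
The $H$-equivariance $(T_U v)(gh_0)=\pi(h_0)^{-1}(T_U v)(g)$ is immediate, while the identity $h_0 U=Uh_0$ (equivalent to conjugation-invariance of $U$) yields $T_U\pi(h_0)=(\Ind_H^G\pi)(h_0)T_U$ for every $h_0\in H$. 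A short Weil-formula computation then produces
\begin{equation*}
T_U^*T_U = \pi(\psi_U), \qquad \psi_U(h) := \mu(U\cap Uh^{-1})/\mu(U),
\end{equation*}
and a second application of conjugation-invariance shows $\psi_U(h_0 h h_0^{-1})=\psi_U(h)$ for all $h_0,h\in H$, whence $\pi(\psi_U)$ lies in the center of $\mr{VN}_\pi$.

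Polar-decompose $T_U = V_U|T_U|$. Since $|T_U|=\pi(\psi_U)^{1/2}$ is central, the partial isometry $V_U$ is again an $H$-intertwiner and its initial projection $p_U$, the support projection of $\pi(\psi_U)$, is a central projection in $\mr{VN}_\pi$. Thus the restriction of $\pi$ to $p_U\Hi_\pi$ is unitarily equivalent to an $H$-subrepresentation of $(\Ind_H^G\pi)|_H$, and in particular is quasi-contained in it. To close the argument, observe that $\psi_U/\|\psi_U\|_1$ is an approximate identity in $\mr L^1(H)$ concentrated at $e_H$, so $\pi(\psi_U/\|\psi_U\|_1)\to I$ strongly and therefore $p_U\to I$ strongly as $U$ shrinks. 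Because $\bigvee_U p_U = I$ in $\mr{VN}_\pi$, the direct sum $\bigoplus_U \pi|_{p_U\Hi_\pi}$ is quasi-equivalent to $\pi$ while being unitarily equivalent to a subrepresentation of an amplification of $(\Ind_H^G\pi)|_H$; this yields the quasi-containment of $\pi$ in $(\Ind_H^G\pi)|_H$.

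The crucial subtlety is the \emph{honest} (not merely approximate) centrality of $\pi(\psi_U)$, which is what makes the polar decomposition produce a genuine $H$-equivariant partial isometry and thereby delivers quasi-containment rather than mere weak containment. Without conjugation-invariance of $U$---that is, outside the SIN setting---$\psi_U$ would only be \emph{approximately} central on $H$, and this argument would collapse to the weaker conclusion that the paper's main theorem establishes for all QSIN groups.
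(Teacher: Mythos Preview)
The paper does not prove this statement itself; Theorem~\ref{CR} is cited as a result of Cowling and Rodway \cite{cr}, with the remark that an examination of their proof of $\mr B(G)|_H=\mr B(H)$ reveals it. There is therefore no in-paper argument to compare against directly.

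Your proof is correct. The essential points---that conjugation-invariance of $U$ forces $h_0U=Uh_0$ and hence makes $T_U$ an exact $H$-intertwiner, that Weil's formula gives $T_U^*T_U=\pi(\psi_U)$, and that $\psi_U$ is conjugation-invariant on $H$ so that $\pi(\psi_U)$ lies in the centre of $\mr{VN}_\pi$---all go through as stated, using that every SIN group (and hence every closed subgroup of one) is unimodular. Polar decomposition then yields an $H$-equivariant partial isometry with central initial projection $p_U$, and $\bigvee_U p_U=I$ follows because $\psi_U/\|\psi_U\|_1$ is a bounded approximate identity in $\mr L^1(H)$. Minor technicalities (e.g.\ that $T_Uv$ need only lie in the $L^2$-completion $\overline{\mc F}(G,\pi)$ rather than in $\mc F(G,\pi)$, and the routine disjointification of the $p_U$ needed to embed $\pi$ into the direct sum) are easily filled in.

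It is worth contrasting your argument with the paper's proof of the QSIN analogue, Theorem~\ref{main thm}. There one also averages an approximate identity over $H$ to produce vectors $f_\alpha\in\mc F(G,\pi)$ (playing the role of your $T_U\xi$), but since only \emph{approximate} centrality is available one cannot polar-decompose; instead one takes a pointwise square root $x_\alpha=\sgn(f_\alpha)\|f_\alpha\|^{1/2}$ and shows that the coefficients $(\Ind_H^G\pi)_{x_\alpha,x_\alpha}$ converge to $\pi_{\xi,\xi}$ uniformly on compacta, obtaining only weak containment. Your operator-theoretic square root is cleaner and delivers quasi-containment directly, but genuinely requires the SIN hypothesis---exactly the point you stress in your closing paragraph.
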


A QSIN group is a locally compact group whose group algebra $\mr L^1(G)$ admits a quasi-central bounded approximate identity. This class of groups contains all amenable groups in addition to every SIN group. It is known that Theorem \ref{CR} fails to hold when the class of SIN groups is replaced with QSIN groups; however, the main result of this paper is that the analogue of Theorem \ref{CR} where quasi-containment is replaced with weak containments is true for the class of QSIN groups.

%A {\it SIN} group (standing for {\it small invariant neighbourhood}) group is a locally compact group $G$ whose identity admits a neighbourhood base of compact sets $K$ such that $s^{-1}Ks=K$ for every $s\in G$. Examples of SIN groups include all discrete, abelian, and compact groups.

%Recall that if $G$ is a locally compact group and $s\in G$, then $\tau(s)\fn L^1(G)\to L^1(G)$ defined by $\tau(s)f(t)=f(s^{-1}ts)\Delta_G(s)$ is an isometric isomorphism. SIN groups can also be characterized as the class of locally compact groups $G$ for which $L^1(G)$ admit a {\it central bounded approximate identity}, i.e. a bounded approximate identity $\{e_\alpha\}\subset L^1(G)$ such that $\tau(s)e_\alpha=e_\alpha$ for every $s\in G$ and index $\alpha$. The class of {\it QSIN} groups (standing for {\it quasi-SIN}) offer a natural generalization from this characterization.

%A locally compact group $G$ is {\it QSIN} if $L^1(G)$ admits a {\it quasi-central approximate identity}, i.e., a bounded approximate identity $\{e_\alpha\}\subset L^1(G)$ such that $\|\tau(s)e_\alpha-e_\alpha\|_1\to G$. In addition to containing every SIN group, the class of QSIN groups contains every amenable group and [FC]$^-$ group. Since QSIN groups are inner amenable, a connected group $G$ is QSIN if and only if $G$ is amenable. 

\begin{theorem}\label{intro main}
	Let $G$ be a QSIN group and $H$ a closed subgroup of $G$. If $\pi\fn H\to \B(\Hi_\pi)$ is a representation of $H$, then $\pi$ is weakly contained in $(\Ind_H^G\pi)|_H$.
\end{theorem}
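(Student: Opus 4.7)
I would model the induced representation via Rieffel's construction: $\Hi_{\Ind_H^G\pi}$ is the completion of $C_c(G) \odot \Hi_\pi$ under the inner product
\[
\lla f_1\otimes\xi_1,\, f_2\otimes\xi_2\rra \;=\; \int_H (f_2^{\ast} * f_1)(h)\,\lla \pi(h)\xi_1,\xi_2\rra\, dh,
\]
with $G$ acting by left translation on the first factor. Fixing a unit vector $\xi\in\Hi_\pi$, the plan is to construct a net of vectors $\eta_\alpha\in\Hi_{\Ind_H^G\pi}$ whose normalized matrix coefficients $\|\eta_\alpha\|^{-2}\lla(\Ind_H^G\pi)(h)\eta_\alpha, \eta_\alpha\rra$ converge, uniformly for $h$ in compact subsets of $H$, to $\lla\pi(h^{-1})\xi, \xi\rra$. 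Since the matrix coefficients of the unitary representation $\pi$ are stable under $h\leftrightarrow h^{-1}$ (equivalently, under complex conjugation), this would show that every matrix coefficient of $\pi$ is a uniform-on-compacta limit of matrix coefficients of $(\Ind_H^G\pi)|_H$, which is exactly the desired weak containment.

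Invoking the QSIN hypothesis, I would pick $(u_\alpha)\subset C_c(G)\cap L^1(G)_+$ to be a QCBAI with $\|u_\alpha\|_1=1$, $\supp u_\alpha\to\{e\}$, and satisfying $\|\delta_g * u_\alpha * \delta_{g^{-1}} - u_\alpha\|_1\to 0$ uniformly for $g$ in compacta. Setting $f_\alpha:=u_\alpha^{1/2}\in L^2(G)\cap C_c(G)$, the Powers--Størmer inequality upgrades the quasi-centrality to $\|f_\alpha^{h_0}-f_\alpha\|_2\to 0$ uniformly for $h_0\in H$ in compacta, where $f_\alpha^{h_0}(g):=f_\alpha(h_0^{-1}g h_0)$. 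Taking $\eta_\alpha:=f_\alpha\otimes\xi$, a direct Weil-formula computation gives
\[
\|\eta_\alpha\|^2 = \int_H \Phi_\alpha(h)\lla\pi(h)\xi,\xi\rra\, dh,\quad \lla(\Ind_H^G\pi)(h_0)\eta_\alpha,\eta_\alpha\rra = \int_H \Xi_\alpha(h, h_0)\lla\pi(h)\xi,\xi\rra\, dh,
\]
where $\Phi_\alpha(h)=\int_G f_\alpha(g)f_\alpha(gh)\, dg$ and $\Xi_\alpha(h, h_0)=\int_G f_\alpha(g)f_\alpha(h_0 gh)\, dg$.

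The key step would be to substitute $g\mapsto h_0^{-1}g h_0$ inside $\Xi_\alpha$ to rewrite it as $\int_G f_\alpha^{h_0}(g) f_\alpha(g h_0 h)\, dg$; Cauchy--Schwarz then bounds $|\Xi_\alpha(h,h_0) - \Phi_\alpha(h_0 h)|\leq \|f_\alpha^{h_0}-f_\alpha\|_2$ \emph{uniformly in} $h$. The change of variable $k=h_0 h$ recasts the matrix coefficient as
\[
\lla(\Ind_H^G\pi)(h_0)\eta_\alpha, \eta_\alpha\rra \;=\; \int_H \Phi_\alpha(k)\lla\pi(h_0^{-1}k)\xi, \xi\rra\, dk \;+\; O\!\left(\|f_\alpha^{h_0}-f_\alpha\|_2\cdot m_\alpha\right),
\]
where $m_\alpha$ denotes the $H$-measure of the common compact support of the two integrands. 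Since $\Phi_\alpha$ is positive definite on $G$ with $\Phi_\alpha(e)=1$ and support shrinking to $\{e\}$, the positive measure $\Phi_\alpha|_H\, dh$ concentrates near $e\in H$, and strong continuity of $\pi$ permits replacing $\lla\pi(h_0^{-1}k)\xi, \xi\rra$ by $\lla\pi(h_0^{-1})\xi, \xi\rra$ with vanishing relative error. After dividing by $\|\eta_\alpha\|^2$, the total mass $\int_H \Phi_\alpha(k)\, dk$ cancels and the ratio converges to $\lla\pi(h_0^{-1})\xi,\xi\rra$ uniformly for $h_0$ in compacta.

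The principal obstacle will be normalization: for non-discrete $H$, $\|\eta_\alpha\|^2\sim\int_H\Phi_\alpha\, dh$ typically tends to $0$, so the absolute error $\|f_\alpha^{h_0}-f_\alpha\|_2\cdot m_\alpha$ from the Cauchy--Schwarz step must be shown to be of genuinely smaller order than $\|\eta_\alpha\|^2$. The clean fix is to ensure that $\int_H\Phi_\alpha\, dh$ is comparable to the support measure $m_\alpha$, which can be arranged by a judicious choice of the QCBAI (e.g.\ smoothings of characteristic functions of neighborhoods of $e$ compatible with the local geometry near $H$). Non-unimodularity of $G$ or $H$ introduces modular-function factors in Rieffel's inner product and in Weil's integration, but does not affect the core argument.
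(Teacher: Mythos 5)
Your high-level strategy is the same as the paper's -- start from a quasi-central bounded approximate identity $\{u_\alpha\}$ with shrinking supports, manufacture vectors in the induced Hilbert space from $u_\alpha$ and a fixed unit vector $\xi\in\Hi_\pi$, and show the resulting matrix coefficients of $(\Ind_H^G\pi)|_H$ converge to $\pi_{\xi,\xi}$ uniformly on compacta. The computations you sketch (Powers--St{\o}rmer to pass from $\|\tau(h_0)u_\alpha-u_\alpha\|_1\to 0$ to $\|u_\alpha^{1/2}\circ\mathrm{Ad}(h_0)-u_\alpha^{1/2}\|_2\to 0$, then Cauchy--Schwarz) are sound as far as they go. But the ``principal obstacle'' you name at the end is not a technicality; it is the actual mathematical content of the theorem, and your proposed fix does not close it. Because you take the square root of $u_\alpha$ \emph{before} anything involving $H$ happens, your vector $\eta_\alpha=u_\alpha^{1/2}\otimes\xi$ has $\|\eta_\alpha\|^2\approx\int_H\Phi_\alpha\to 0$ for nondiscrete $H$, while your Cauchy--Schwarz error is of order $\epsilon_\alpha\, m_\alpha$ with $m_\alpha$ the measure of $\supp(\Phi_\alpha|_H)$. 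You therefore need $\int_H\Phi_\alpha\gtrsim m_\alpha$ uniformly in $\alpha$. This is a genuine geometric constraint on how the level sets of $u_\alpha$ meet $H$: the autocorrelation $\Phi_\alpha(h)=\langle u_\alpha^{1/2},L_h u_\alpha^{1/2}\rangle$ is nonnegative but can be small on most of its support in $H$ (e.g.\ for $u_\alpha$ supported on an arithmetic-progression-like set), so the comparability fails for a badly chosen net. Asserting that one can choose the QCBAI to be simultaneously quasi-central, of shrinking support, \emph{and} geometrically adapted to an arbitrary closed subgroup $H$ of an arbitrary QSIN group is an unproven claim, and it is not clear how to establish it: quasi-centrality already pins down the net quite rigidly, and you have no control over how $H$ sits inside $G$.

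The paper's proof is engineered precisely to make this normalization problem disappear. Rather than forming $u_\alpha^{1/2}\otimes\xi$, it first pushes $e_\alpha$ down to a section $f_\alpha(g)=\int_H\delta_H^G(h^{-1})^2 e_\alpha(gh)\pi(h)\xi\,dh$ (i.e.\ integrates over $H$ \emph{first}) and only then takes a pointwise square root of the norm, setting $x_\alpha(g)=\sgn(f_\alpha(g))\,\|f_\alpha(g)\|^{1/2}$. Then $\|x_\alpha\|^2=\int_{G/H}\rho(g)^{-1}\|f_\alpha(g)\|\,d\dot g\approx\int_G e_\alpha=1$ by Weil's formula, with no hypothesis on the shape of $\supp e_\alpha$ relative to $H$; the quasi-centrality is used only to control $\big|\,\|x_\alpha(s^{-1}g)\|-\|x_\alpha(g)\|\,\big|$ in an $\mr L^1$-averaged sense, and the shrinking supports control the ``phase'' $\sgn f_\alpha(g)$. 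If you want to salvage your Rieffel-picture argument, you would need to replace $u_\alpha^{1/2}\otimes\xi$ by a vector implementing this ``square root after fibering over $H$'' idea, which is no longer an elementary tensor; as written, your proof has a gap exactly where the difficulty of the theorem lives.
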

\noindent This result provides a powerful tool for understanding local and approximation properties group C*-algebras of non-discrete groups.

Recall that Lance showed in 1973 that a discrete group $G$ is amenable if and only if $\mr C_r^*(G)$ is nuclear (see \cite{lance}). Since the quotient of a nuclear C*-algebra is nuclear, we also get the characterization that a discrete group $G$ is amenable if and only if $\mr C^*(G)$ is nuclear. Partially motivated by these characterizations, there is a lot of research centred around understanding the local and approximation properties of groups C*-algebras for nonamenable discrete groups.
%Some positive results on the topic include...
%Providing contrasting results, Wasserman showed that the sequence
An early result in the subject due to Wasserman is that the sequence
	$$ 0\to \mr C^*(\F_2)\otimes_{\min}J\to \mr C^*(\F_2)\otimes_{\min}\mr C^*(\F_2)\to \mr C^*(\F_2)\otimes_{\min} C^*_r(\F_2) \to 0$$
is not exact, where $J$ is the kernel of the canonical map $\mr C^*(\F_2)\to\mr C^*_r(\F_2)$ (see \cite{wa}).
It follows that $\mr C^*(\F_2)$ is not locally reflexive and $\mr C^*_r(\F_2)$ does not have the local lifting property.

Lance's characterization of amenability does not hold for nondiscrete groups since $\mr C^*(G)$ is nuclear for every separable connected group $G$ (\cite[Corollary 6.9]{con}). As such, the local and approximation properties of nondiscrete nonamenable groups are considered difficult to understand in general. Theorem \ref{intro main} provides a powerful tool for understanding some of these properties for group C*-algebras of QSIN groups. As a particular application of Theorem \ref{intro main}, it is shown that the analogue of Wasserman's result holds for every QSIN group containing a copy of $\F_2$ as a closed subgroup.

\begin{theorem}
	Let $G$ be a QSIN group which contains $\F_2$ as a closed subgroup. Then the sequence
	\begin{equation*}
	0\to \mr C^*(G)\otimes_{\min}K\to \mr C^*(G)\otimes_{\min}\mr C^*(G)\to \mr C^*(G)\otimes_{\min} C^*_r(G) \to 0
	\end{equation*}
	is not exact, where $K$ is the kernel of the canonical map $\mr C^*(G)\to \mr C_r^*(G)$.
\end{theorem}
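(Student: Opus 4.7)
The plan is to prove this by contradiction, reducing to Wasserman's non-exactness result for $\F_2$. Assume the displayed sequence for $G$ is exact; I will derive that the analogous sequence for $\F_2$ is also exact, contradicting Wasserman's theorem.

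Let $H=\F_2\leq G$, let $\pi_u$ denote the universal representation of $H$, and let $\lambda_H$ denote its left regular representation. Set $\sigma=\Ind_H^G\pi_u$, and note that $\Ind_H^G\lambda_H\cong\lambda_G$ by induction in stages. Applying Theorem \ref{intro main} to $\pi_u$ gives $\pi_u\prec\sigma|_H$; since $\pi_u$ is universal, this yields a faithful $*$-homomorphism $\mr C^*(\F_2)\hookrightarrow M(\mr C^*_\sigma)\subset\B(\Hi_\sigma)$, where $\mr C^*_\sigma$ is a quotient of $\mr C^*(G)$. Applied to $\lambda_H$, Theorem \ref{intro main} analogously gives a faithful embedding $\mr C^*_r(\F_2)\hookrightarrow M(\mr C^*_r(G))$. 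Since $\pi_u\succ\lambda_H$, we also have $\sigma\succ\lambda_G$, providing a surjection $\mr C^*_\sigma\twoheadrightarrow\mr C^*_r(G)$ that fits into a commuting square with the canonical quotient $\mr C^*(\F_2)\to\mr C^*_r(\F_2)$.

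The next step is to assemble product representations. The product $\sigma\otimes\lambda_G$ gives a $*$-homomorphism $\mr C^*(G)\otimes_{\min}\mr C^*(G)\to\B(\Hi_\sigma\otimes \mr L^2(G))$ with image $\mr C^*_\sigma\otimes_{\min}\mr C^*_r(G)$. Restricting both tensor factors to $H$ through the embeddings above (and using that min tensor products of faithful $*$-representations are faithful) produces a faithful embedding of $\mr C^*(\F_2)\otimes_{\min}\mr C^*_r(\F_2)$ into $\B(\Hi_\sigma\otimes \mr L^2(G))$. Similarly, $\sigma\otimes\sigma$ yields a faithful embedding of $\mr C^*(\F_2)\otimes_{\min}\mr C^*(\F_2)$ into $\B(\Hi_\sigma\otimes\Hi_\sigma)$. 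The canonical map $\mr{id}\otimes q_G\fn\mr C^*(G)\otimes_{\min}\mr C^*(G)\to\mr C^*(G)\otimes_{\min}\mr C^*_r(G)$ intertwines these embeddings with $\mr{id}\otimes q_{\F_2}\fn\mr C^*(\F_2)\otimes_{\min}\mr C^*(\F_2)\to\mr C^*(\F_2)\otimes_{\min}\mr C^*_r(\F_2)$ via the commuting square from the previous paragraph.

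Assuming the $G$-sequence is exact, $\ker(\mr{id}\otimes q_G)=\mr C^*(G)\otimes_{\min}K$; pushing this through the commuting diagram forces $\ker(\mr{id}\otimes q_{\F_2})=\mr C^*(\F_2)\otimes_{\min}J$, where $J=\ker(\mr C^*(\F_2)\to\mr C^*_r(\F_2))$, contradicting Wasserman's theorem. The main technical hurdle will be correctly handling min tensor products involving multiplier algebras: since $\mr C^*(\F_2)$ embeds into $M(\mr C^*_\sigma)$ rather than $\mr C^*_\sigma$ itself, verifying that the induced maps on tensor products are well-defined and that the relevant kernels transfer correctly will likely require a nondegeneracy or strict approximation argument.
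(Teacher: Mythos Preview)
Your reduction idea is right in spirit, but the ``commuting diagram'' step is where the argument breaks down. You embed $\mr C^*(\F_2)$ into $M(\mr C^*_\sigma)$ and $\mr C^*_r(\F_2)$ into $M(\mr C^*_r(G))$, but neither of these gives a $*$-homomorphism between $\mr C^*(\F_2)\otimes_{\min}\mr C^*(\F_2)$ and $\mr C^*(G)\otimes_{\min}\mr C^*(G)$ in either direction. Both algebras map into $\B(\Hi_\sigma\otimes\Hi_\sigma)$, but their images only meet through multipliers, and the map from $\mr C^*(G)\otimes_{\min}\mr C^*(G)$ is not even injective (since $\sigma=\Ind_H^G\pi_u$ need not be faithful on $\mr C^*(G)$). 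So when you take Wasserman's witness $y\in\ker(\id\otimes q_{\F_2})\setminus \mr C^*(\F_2)\otimes_{\min}J$, its image is a multiplier of $\mr C^*_\sigma\otimes_{\min}\mr C^*_\sigma$, not an element of $\mr C^*(G)\otimes_{\min}\mr C^*(G)$ to which the assumed exactness of the $G$-sequence could be applied. The ``nondegeneracy or strict approximation'' patch you allude to would have to manufacture such an element, and it is not clear how to do this while controlling membership in $\mr C^*(G)\otimes_{\min}K$; this is the missing idea, not just a technicality.

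The paper sidesteps this by transporting a \emph{representation} rather than an element or an algebra map. It takes the canonical representation $\sigma$ of $\F_2\times\F_2$ on $(\mr C^*(\F_2)\otimes_{\min}\mr C^*(\F_2))/(\mr C^*(\F_2)\otimes_{\min}J)$---which by Wasserman is not weakly contained in $\pi_u\times\lambda_{\F_2}$---and sets $\omega=\Ind_{\F_2\times\F_2}^{G\times G}\sigma$. Continuity of induction shows $\omega$ extends to $\mr C^*(G)\otimes_{\min}\mr C^*(G)$; a short lemma on restrictions of induced representations (Lemma~\ref{ind lemma}) shows $\omega|_{\{e\}\times G}\prec\lambda_G$, so $\omega$ annihilates $\mr C^*(G)\otimes_{\min}K$. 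If the $G$-sequence were exact, $\omega$ would then be weakly contained in $\pi_{u,G}\times\lambda_G$, hence $\omega|_{\F_2\times\F_2}\prec\pi_u\times\lambda_{\F_2}$. But Theorem~\ref{main thm} applied to the QSIN group $G\times G$ gives $\sigma\prec\omega|_{\F_2\times\F_2}$, contradicting Wasserman. The induction functor is exactly what lets one move the obstruction from $\F_2$ to $G$ without needing algebra homomorphisms between their group C*-algebras.
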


\noindent Consequently $\mr C^*(G)$ is not locally reflexive and $\mr C^*_r(G)$ does not admit the LLP for such groups $G$.

In the final section of the paper, Theorem \ref{CR} and Theorem \ref{intro main} are applied to shed light on the problem of when it is possible to ``extend'' a Fourier space $\mr A_\pi$ or ``weakly extend'' a Fourier-Stieltjes space $\mr B_\pi$ from a closed subgroup $H$ of a locally compact group $G$ to a Fourier space or Fourier-Stieltjes space of $G$.

%This result has applications to the approximation properties of group C*-algebras. Recall that a C*-algebra $A$ is {\it exact} if $0\to A\otimes_{\min}J\to A\otimes_{\min}B\to A\otimes_{\min}(B/J)\to 0$ is exact for every short exact sequence $0\to J\to B\to B/J\to 0$ of C*-algebras. In DATE Wasserman showed that $\mr C^*(\F_2)$ is not exact. Consequently, $\mr C^*(G)$ is not exact for every discrete group $G$ containing a copy of $\F_2$ since C*-subalgebras of an exact C*-algebras are exact. As an application of the main result in this paper, we show that the analogous property holds for QSIN groups.

%\begin{theorem}
%	If $G$ is a QSIN group which contains $\F_2$ as a closed subgroup, then $\mr C^*(G)$ is not locally reflexive.
%\end{theorem}

%This paper is organized as follows:
%\begin{itemize}
%	\item Section 2 introduces the background and notation necessary for this paper,
%	\item Section 3 is dedicated to proving the main result in the paper,
%	\item Section 4 studies the extension problem for Fourier spaces $\mr A_\pi$,
%	\item Section 5 applies the main result of the paper to studying the approximation properties of group C*-algebras.
%\end{itemize}

\section{Background and Notation}

Due to the broad scope of topics touched on by this paper, we begin with a relatively extensive background section.

%We choose to include a relatively extensive background section due to the broad scope of topics touched by this paper.

\subsection{Quasi-containment and weak-containment of representations}\label{containment}

Let $G$ be a locally compact group. A representation $\pi\fn G\to \B(\Hi_\pi)$ is {\it quasi-contained} in second representation $\sigma\fn G\to \B(\Hi_\sigma)$ of $G$  if $\pi$ is unitarily equivalent to a subrepresentation of some amplification %$\sigma^{\oplus \omega}$
of $\sigma$. Quasi-equivalence is extremely important in the study of von Neumann algebras associated to $G$ since the map $\sigma(g)\mapsto \pi(g)$ for $g\in G$ extends to a normal $*$-homomorphism from $\mr{VN}_\sigma:=\sigma(G)''\subset \B(\Hi_\sigma)$ to $\mr{VN}_\pi:=\pi(G)''\subset \mc B(\Hi_\pi)$ if and only if $\pi$ is quasi-contained in $\sigma$.

Weak containment is essential to the study of group C*-algebras in a similar way as quasi-containment is to group von Neumann algebras. For a representation $\pi\fn G\to \B(\Hi_\pi)$ of $G$ and $\xi,\eta\in\Hi_\pi$, we will let $\pi_{\xi,\eta}\fn G\to \C$ be the matrix coefficient defined by $\pi_{\xi,\eta}(g)=\lla \rho(g)\xi,\eta\rra$. A representation $\pi\fn G\to \B(\Hi_\pi)$ is {\it weakly contained} in $\sigma\fn G\to \B(\Hi_\sigma)$ if every positive definite function of the form $\pi_{\xi,\xi}$ for $\xi\in\Hi_\pi$ is the limit of positive definite functions with the form $\sum_{j=1}^N \sigma_{\eta_j,\eta_j}$ for $N\in\N$ and $\eta_1,\ldots,\eta_N\in \Hi_\sigma$ in the topology of uniform convergence on compact subsets of $G$. The representation $\pi$ is weakly contained in $\sigma$ if and only if $\|\pi(f)\|\leq \|\sigma(f)\|$ for every $f\in \mr L^1(G)$ if and only if the map $\pi(f)\mapsto \sigma(f)$ for $f\in \mr L^1(G)$ extends to a  $*$-homomorphism from $\mr C^*_\sigma:=\overline{\sigma(\mr L^1(G))}^{\|\cdot\|}$ to $\mr C^*_\pi :=\overline{\pi(\mr L^1(G))}^{\|\cdot\|}$.

Further details on quasi-containment and weak containment can be found in \cite{dix}.

\subsection{Some classes of locally compact groups}\label{classes}

A {\it SIN} group (standing for {\it small invariant neighbourhood} group) is a locally compact group $G$ whose identity admits a neighbourhood base of compact sets $K$ which are invariant under conjugation, i.e., such that $s^{-1}Ks=K$ for all $s\in G$. Examples of SIN groups include all discrete, abelian, and compact groups. %Equivalently, a locally compact group $G$ is SIN if and only if $\mr L^1(G)$ admits a central bounded approximate identity, i.e., a bounded approximate identity $\{e_\alpha\}}\subset L^1(G)$ such that $\tau(s)e_\alpha$ \cite{}.

Recall that if $G$ is a locally compact group with modular function $\Delta_G$ and $s\in G$, then $\tau(s)\fn \mr L^1(G)\to \mr L^1(G)$ defined by $\tau(s)f(t)=f(s^{-1}ts)\Delta_G(s)$ is an isometric isomorphism. It is a well known result of Mosak that SIN groups are exactly the class of locally compact groups $G$ for which $\mr L^1(G)$ admits a {\it central bounded approximate identity}, i.e. a bounded approximate identity $\{e_\alpha\}\subset \mr L^1(G)$ such that $\tau(s)e_\alpha=e_\alpha$ for every $s\in G$ and index $\alpha$ (see \cite{mos}). Using this characterization, QSIN groups (standing for {\it quasi-SIN} groups) offer a natural generalization of SIN groups.

A locally compact group $G$ is {\it QSIN} if $\mr L^1(G)$ admits a {\it quasi-central bounded approximate identity}, i.e., a bounded approximate identity $\{e_\alpha\}\subset \mr L^1(G)$ such that $\|\tau(s)e_\alpha-e_\alpha\|_1\to 0$ uniformly on compact subsets of $G$. QSIN groups are much more general than the class of SIN groups and, by a result of Losert and Rindler, contains every amenable group (see \cite{lr}). Interested readers are encouraged to see \cite{stokke} where much of the basic theory of QSIN groups is developed and \cite[Remark 2.2]{lss} for a list of examples and non-examples of QSIN groups.

\subsection{Induced Representations}

We refer the reader to \cite{kt} as a resource on induced representations. We will be following the conventions used within this book, but pause to fix notation and summarize key results.

Let $G$ be a locally compact group and $H$ be a closed subgroup of $G$. If $\pi\fn H\to \mc B(\Hi_\pi)$ is a representation of $H$, we set $\mc F(G,\pi)$ to be the set of norm continuous functions $f \fn G\to \Hi_\pi$ so that
\begin{enumerate}
	\item $f(gh)=\delta_H^G(h)\pi(h^{-1})f(g)$ for $g\in G$ and $h\in H$, and
	\item $q(\supp f)$ is compact in $G/H$
\end{enumerate}
where $\delta_H^G(h):=\Delta_H(h)^{\frac{1}{2}}\Delta_G(h)^{-\frac{1}{2}}$ and $q\fn G\to G/H$ is the quotient map. We define an inner product on $\mc F(G,\pi)$ by letting
$$\lla f_1,f_2\rra=\int_{G}\psi(g)\lla f_1(g),f_2(g)\rra\,dg$$
for $f_1,f_2\in \mc F(G,\pi)$, where $\psi$ is any function in $\mr C_{\mr c}(G)$ such that $\psi^\#(\dot g)=1$ for every $g\in (\supp\,f)\cup(\supp\,g)$, where $\psi^\#\in {\mr C}_{\mr c}(G/H)$ is defined by $\psi^\#(\dot g)=\int_H \psi(gh)\,dh$. Let $\overline{\mc F}(G,\pi)$ denote the completion of $\mc F(G,\pi)$ with respect to this inner product. The induced representation of $\pi$ is given by $\Ind_H^G \pi(s) f=L_s f$ for $f\in \mc F$, where $L_s$ is the continuous extension of the left shift operator $L_sf(g)=f(s^{-1}g)$ from $\mc F(G,\pi)$ to $\overline{\mc F}(G,\pi)$.

Fix a continuous rho-function $\rho$ for $(G,H)$, i.e., a continuous function $\rho\fn G\to (0,\infty)$ such that $\rho(gh)=\delta_H^G(h)^2\rho(g)$ for all $h\in H$, $g\in G$, and a quasi-invariant measure $d\dot g$ on $G/H$ such that
$$ \int_{G/H}\int_H f(gh)\,dh\,d\dot g=\int_G f(g)\rho(g)\,dg$$
for all $f\in \mr C_{\mr c}(G)$. Then a straightforward calculation shows that
$$\lla f,g\rra =\int_{G/H}\frac{1}{\rho(g)}\lla f_1(g),f_2(g)\rra\,d\dot g.$$
We will favour this latter equation in our calculations.

\subsection{Fourier and Fourier-Stieltjes spaces}

The basic theory of Fourier and Fourier-Stieltjes was developed by Eymard in \cite{eymard} and further refined by his student Arsac in \cite{arsac}. We will give an overview of the results from these two works as they apply to this paper.

Let $G$ be a locally compact group. Recall from Section \ref{containment} that if $\pi\fn G\to \mc B(\Hi_\pi)$ is a representation of $G$ and $\xi,\eta\in\Hi_\pi$, then $\pi_{\xi,\eta}\fn G\to \C$ is defined by $\pi_{\xi,\eta}(s)=\lla \pi(s)\xi,\eta\rra$. The {\it Fourier-Stieltjes algebra} $\mr B(G)$ is the set of all matrix coefficients $\pi_{\xi,\eta}$ as $\pi$ ranges over all representations  of $G$ and $\xi,\eta$ over the associated Hilbert spaces $\Hi_\pi$. The Fourier-Stieltjes algebra is a Banach algebra which is the dual of $\mr C^*(G)$ with respect to pointwise operations, norm given by
$$ \|u\|:=\inf\{\|\xi\|\|\eta\| : u=\pi_{\xi,\eta}\},$$
and dual pairing
$$<\! u,f\!>:=\int_G u(s)f(s)\,ds$$
for $u\in B(G)$ and $f\in \mr L^1(G)$.

For each representation $\pi\fn G\to \mc B(\Hi_\pi)$ of $G$, the Fourier space $\mr A_\pi$ is defined to be the norm closed linear span of matrix coefficients $\pi_{\xi,\eta}$ in $\mr B(G)$ as $\xi$ and $\eta$ range over $\Hi_\pi$, and the Fourier-Stieltjes space $\mr B_\pi$ is defined to be the weak* closure of $\mr A_\pi$ in $\mr B(G)$. Then $\mr A_\pi$ can be naturally identified with the predual of $\mr{VN}_\pi$ and $\mr B_\pi$ with the dual of $\mr C^*_\pi$.  Every Fourier space $\mr A_\pi$ is invariant under both left and right translation by $G$ and, conversely, every norm closed subspace $E$ of $\mr B(G)$ which is closed under left and right translation by $G$ is a Fourier space $\mr A_\pi$. As a distinguished Fourier space, the {\it Fourier algebra}  $\mr A(G)$ is defined to be $\mr A_\lambda$, where $\lambda\fn G\to \mc B(\mr L^2(G))$ is the left regular representation of $G$.

Let $\pi$ and $\sigma$ be representations of a locally compact group $G$. Then $\mr A_\sigma\subset \mr A_\pi$ if and only if $\sigma$ is quasi-contained in $\pi$ and $\mr B_\sigma\subset \mr B_\pi$ if and only if $\sigma$ is weakly contained in $\pi$. The Fourier space $\mr A_\pi$ is a subalgebra of $\mr B(G)$ if and only if $\pi\otimes\pi$ is quasi-contained in $\pi$ and, similarly, $\mr B_\pi$ is a subalgebra of $\mr B(G)$ if and only if $\pi\otimes\pi$ is weakly contained in $\pi$. Further, $\mr A_\pi$ is an ideal of $\mr B(G)$ if and only if $\pi\otimes \sigma$ is quasi-contained in $\pi$ for every representation $\sigma$ of $G$, and $\mr B_\pi$ is an ideal of $\mr B(G)$ if and only if $\pi\otimes \sigma$ is weakly contained in $\pi$ for every representation $\sigma$ of $G$. In particular, $\mr A(G)$ is an ideal of $\mr B(G)$ by Fell's absorption principle.

The final facts which we will need to know about Fourier and Fourier-Stieltjes space are the following. Suppose that $H$ is a closed subgroup of a locally compact group $G$ and $\pi$ is a representation of $G$. Then $\mr A_\pi|_{H}=\mr A_{\pi|_H} $ and $\mr B_\pi|_{H}\subset \mr B_{\pi|_H}$.

\subsection{Some local properties of C*-algebras}

A C*-algebra (or an operator space) $A$ is {\it locally reflexive} if for every finite dimensional subspace $E\subset X$ and complete contraction $\phi\fn E\to A^{**}$, there exists a net $\phi_i\fn E\to A$ of complete contractions so that $\phi_i\to \phi$ in the point-weak* topology. Local reflexivity is also equivalent to property C$''$ (see \cite[Proposition 18.15]{pisier}). Recall that a C*-algebra $A$ is {\it exact} if the sequence
$$ 0\to A\otimes_{\min} J\to A\otimes_{\min} B\to A\otimes_{\min} C\to 0$$
is exact for every short exact sequence $0\to J\to B\to C\to 0$ of C*-algebras. Every exact C*-algebra is locally reflexive.

Now suppose that $A$ is a unital C*-algebra. The C*-algebra $A$ has the {\it local lifting property} ({\it LLP}) if for every finite dimensional operator system $E\subset A$, unital C*-algebra $B$ with two-sided closed ideal $J$, and unital completely positive (ucp) map $\phi\fn A\to (B/J)$ there exists a ucp map $\psi\fn E\to B$ such that $\phi|_E=\pi\circ \psi$ where $\pi\fn B\to B/J$ is the quotient map. We say that a non-unital C*-algebra $A$ has the LLP whenever its unitization does. Kirchberg showed that a C*-algebra $A$ has the LLP if and only if
$$ A\otimes_{\min} \mc B(\Hi)=A\otimes_{\max} \mc B(\Hi)$$
canoncially, where $\Hi$ is the separable infinite dimensional Hilbert space (see \cite{ki}).

All of this and further information on local reflexivity and the local lifting property can be found in each of the books \cite{bo,er,pisier}.

\section{Main result}

\begin{theorem}\label{main thm}
	Suppose $G$ is a QSIN group and $H$ is a closed subgroup of $G$. Then
	$ (\Ind_H^G \pi )|_H$ weakly contains $\pi$ for every representation $\pi\fn H\to \mc B(\Hi_\pi)$ of $H$.
\end{theorem}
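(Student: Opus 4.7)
I would verify weak containment via the matrix-coefficient characterization: given $\xi \in \Hi_\pi$, construct a net of vectors $\eta_\alpha \in \overline{\mc F}(G,\pi)$ such that $\langle (\Ind_H^G \pi)(h_0)\eta_\alpha,\eta_\alpha\rangle \to \pi_{\xi,\xi}(h_0)$ uniformly on compact subsets of $H$. The QSIN hypothesis supplies a bounded approximate identity $\{e_\alpha\} \subset \mr L^1(G)$ with $\|\tau(s)e_\alpha - e_\alpha\|_1 \to 0$ uniformly on compact subsets of $G$; after standard reductions one may assume $e_\alpha \in \mr C_{\mr c}(G)^+$, $\|e_\alpha\|_1 = 1$, and $\supp(e_\alpha) \to \{e\}$. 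Set $\phi_\alpha := e_\alpha^{1/2} \in \mr C_{\mr c}(G)^+$, so that $\|\phi_\alpha\|_2 = 1$. The pointwise inequality $(\sqrt f - \sqrt g)^2 \leq |f-g|$ for non-negative $f,g$ converts the QSIN condition into the statement $\|\tilde\tau(s)\phi_\alpha - \phi_\alpha\|_2 \to 0$ uniform on compacts, where $\tilde\tau(s)\phi(t) := \Delta_G(s)^{1/2}\phi(s^{-1}ts)$ defines a unitary action of $G$ on $\mr L^2(G)$.

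Define $\eta_\alpha(g) := \int_H \delta_H^G(h)^{-1}\phi_\alpha(gh)\pi(h)\xi\,dh$; the multiplicativity of $\delta_H^G$ together with left-invariance in $H$ confirms that $\eta_\alpha \in \mc F(G,\pi)$. A Fubini-type computation using the substitution $h = h'k$ in $H$ and the quotient integration formula from the setup for induced representations produces
\[
\langle (\Ind_H^G\pi)(h_0)\eta_\alpha, \eta_\alpha\rangle = \int_H \pi_{\xi,\xi}(k)\delta_H^G(k)^{-1}\int_G \phi_\alpha(h_0^{-1}sk)\overline{\phi_\alpha(s)}\,ds\,dk.
\]
Substituting $s = h_0 t h_0^{-1}$ in the inner integral and using $\phi_\alpha(h_0 t h_0^{-1}) = \Delta_G(h_0)^{1/2}\tilde\tau(h_0^{-1})\phi_\alpha(t)$, followed by the QSIN approximation $\tilde\tau(h_0^{-1})\phi_\alpha \approx \phi_\alpha$, reduces the inner integral (up to errors vanishing uniformly in $h_0$ on compacts) to the autocorrelation $\langle R_\ell \phi_\alpha, \phi_\alpha\rangle_{\mr L^2(G)}$ where $k = h_0\ell$. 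This autocorrelation concentrates at $\ell = e$ as $\supp(\phi_\alpha) \to \{e\}$, and the outer integral against the continuous function $\pi_{\xi,\xi}(h_0\ell)\delta_H^G(\ell)^{-1}$ then selects the value at $\ell = e$, producing $\pi_{\xi,\xi}(h_0)$ in the limit (times a normalization constant that one arranges to be $1$).

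The hard part is the bookkeeping of modular factors. The substitutions $s = h_0 t h_0^{-1}$ and $k = h_0\ell$ each introduce Jacobians involving $\Delta_G(h_0)$ and $\delta_H^G(h_0)$, and a naive normalization seems to yield a limit of the form $\delta_H^G(h_0)^c\, \pi_{\xi,\xi}(h_0)$ for some nonzero $c$ rather than $\pi_{\xi,\xi}$ itself. Resolving this cleanly likely requires a refined choice of $e_\alpha$, whose averages along $H$-cosets are normalized correctly, or a modified averaging formula for $\eta_\alpha$ (for instance, replacing $\phi_\alpha(gh)$ by a variant involving a continuous rho-function for $(G,H)$), and possibly replacing the single vector $\eta_\alpha$ by a finite sum. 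The essential mechanism, which ensures the approach is sound despite these technicalities, is that QSIN provides approximate $\mr L^2$-invariance of $\phi_\alpha$ under conjugation by elements of $H$; this is precisely what allows the $h_0$-action on the induced Hilbert space to be absorbed by $\phi_\alpha$, leaving $\pi(h_0)$ to act on $\xi$. This approximate-invariance mechanism is what distinguishes the QSIN/weak-containment setting of the present theorem from the SIN/quasi-containment setting of Theorem~\ref{CR}.
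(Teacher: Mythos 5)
Your high-level strategy is the same as the paper's: average the quasi-central approximate identity against $h\mapsto\pi(h)\xi$ over $H$ to build almost-invariant vectors in $\overline{\mc F}(G,\pi)$, and use quasi-centrality to absorb the $h_0$-translation. But the specific construction has a genuine gap, and it is exactly the one you flag yourself: the normalization does not come out right, and none of the fixes you gesture at ("a refined choice of $e_\alpha$", "a modified averaging formula", "possibly a finite sum") is actually carried out. Concretely, with $\eta_\alpha(g)=\int_H\delta_H^G(h)^{-1}e_\alpha^{1/2}(gh)\pi(h)\xi\,dh$ there is no control on $\|\eta_\alpha\|$: already for $G=\R^2$, $H=\R\times\{0\}$ and $e_\alpha=(2\epsilon)^{-2}1_{[-\epsilon,\epsilon]^2}$ one gets $\|\eta_\alpha\|^2\asymp\epsilon\to 0$. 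Dividing by $\|\eta_\alpha\|$ does not rescue the argument, because your error terms (from $\|\tilde\tau(h_0)\phi_\alpha-\phi_\alpha\|_2\to 0$ and from the concentration of the autocorrelation) are $o(1)$ in absolute terms, whereas they would need to be $o(\|\eta_\alpha\|^2)$; the rate at which $e_\alpha$ becomes central and the rate at which $\|\eta_\alpha\|^2$ degenerates are not coupled, and the degeneration depends on how $\supp e_\alpha$ sits relative to $H$, which you cannot prescribe. On top of this sits the modular-factor discrepancy you describe, which in the non-unimodular case leaves a spurious power of $\delta_H^G(h_0)$ (or $\Delta_H(h_0)$) in the putative limit.

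The missing idea is the order of operations. The paper does \emph{not} take the square root of $e_\alpha$ before averaging; it first forms $f_\alpha(g)=\int_H\delta_H^G(h^{-1})^2e_\alpha(gh)\pi(h)\xi\,dh$ and then sets $x_\alpha(g)=\sgn(f_\alpha(g))\,\|f_\alpha(g)\|^{1/2}$, i.e.\ it takes the square root of the \emph{norm} of the $H$-average while retaining its direction. The weight $\delta_H^G(h^{-1})^2$ is chosen to match the rho-function identity $\rho(gh)=\delta_H^G(h)^2\rho(g)$, so that $g\mapsto\|f_\alpha(g)\|$ is (up to an error controlled by $\supp e_\alpha\to\{e\}$) a probability density on $G/H$ with respect to $\rho(g)^{-1}d\dot g$; this forces $\|x_\alpha\|\to 1$ with no further choices, and kills all modular factors because $x_\alpha(gh)=\delta_H^G(h)\pi(h^{-1})x_\alpha(g)$ exactly. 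The inequality $|\sqrt a-\sqrt b|^2\le|a-b|$ is then applied on $G/H$ (to $\|f_\alpha(s^{-1}g)\|$ versus $\|f_\alpha(g)\|$, whose $\mr L^1(G/H)$-distance is controlled by $\|\tau(s)e_\alpha-e_\alpha\|_1$), not on $G$ to $e_\alpha$ itself; the matrix-coefficient information is carried separately by the unit vectors $\sgn f_\alpha(g)$, which are shown pointwise close to $\pi(h_1)\xi$ on the relevant cosets. You correctly identified quasi-centrality as the mechanism and even the right elementary inequality, but without this factorization into ``direction'' and ``density on $G/H$'' the computation does not close, and your writeup acknowledges as much rather than closing it.
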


\begin{proof}
	Fix a unit vector $\xi\in\Hi_\pi$ and recall from Section \ref{classes} that $\tau(s)\fn \mr L^1(G)\to \mr L^1(G)$ is defined by $\tau(s)f(t)=f(s^{-1}ts)\Delta_G(s)$ for every $s\in G$ and $f\in \mr L^1(G)$. Since $G$ is QSIN, there exists a net $\{e_\alpha\}$ in $\mr L^1(G)_1^+\cap \mr C_{\mr c}(G)$ such that $\supp\,e_\alpha\to \{e\}$ and $\|\tau(s)e_\alpha-e_\alpha\|_1\to 0$ uniformly on compact subsets of $G$ (see \cite[Theorem 2.6]{stokke}). For each index $\alpha$, we define $f_{\alpha}\in \mr C(G,\Hi_\pi)$ by
	$$ f_{\alpha}(g)=\int_H \delta_H^G(h^{-1})^2e_{\alpha}(gh)\pi(h)\xi\,dh $$
	and $x_{\alpha}\in \mr C(G,\Hi_\pi)$ by
	$$x_{\alpha}(\dot g)=\sgn(f_{\alpha}(\dot g))\|f_{\alpha}(\dot g)\|^{1/2},$$
	where $\sgn\fn \Hi_\pi\to \Hi_\pi$ is given by
	$$\sgn(\eta)=\left\{\begin{array}{c l}
	\frac{\eta}{\|\eta\|},&\tn{if }\eta\neq 0\\
	0, &\tn{if }\eta =0.
	\end{array}\right.$$
	Observe that $x_\alpha\in \mc F(G,\Hi_\pi)$ since 
	$$ f_\alpha(gh)=\int_H \delta_H^G(h_1^{-1}h)^2e_\alpha(gh_1)\,\pi(h^{-1}h_1)\xi\,dh_1=\delta_H^G(h)^2 \pi(h^{-1})f_{\alpha}(g).$$
	We will show that $$(\Ind_H^G\pi)_{x_\alpha,x_\alpha}(h)\to \pi_{\xi,\xi}(h)$$
	uniformly on compact subsets of $H$.

%	\begin{observation}\label{obs}
%		Let $K_1$ be a compact subset of $G$ and $K_2$ a compact subset of $H$. Fix $0<\epsilon<1$. Since $\pi(K_2)\xi$ is a compact subset of $\Hi$, there exists a symmetric neighbourhood $U$ of the identity in $G$ so that
%		$$ \|\pi(h)\pi(s_2)\xi-\pi(s_2)\xi\|<\epsilon $$
%		for every $\epsilon>0$. Further, since $K_2K_1$ is compact and $\supp\,e_\alpha\to \{e\}$, there exists an index $\alpha_0$ so that $\supp\big(\tau(s_2s_1)\,e_\alpha\big)\subset U$ for all $s_1\in K_1$, $s_2\in K_2$, and $\alpha\geq \alpha_0$. %This will be essential throughout this proof.
%	\end{observation}

	\begin{claim}\label{sgn claim}
		Let $K$ be a compact subset of $H$ and $\epsilon>0$. There exists an index $\alpha_0$ so that if $s\in K$, $g\in G$, and $\alpha\geq \alpha_0$, then
			$$ \big|\lla \sgn\,x_{\alpha}(s^{-1}g),\sgn\,x_{\alpha}(g)\rra-\pi_{\xi,\xi}(s)\big|<\epsilon$$
		whenever $x_\alpha(g)\neq 0$ and $x_\alpha(s^{-1}g)\neq 0$.
	\end{claim}

	\begin{claimproof}
	Without loss of generality, we may assume that $\epsilon<2\sqrt{2}$ and $e\in K$.
	Since $\pi(K)\xi$ is a compact subset of $\Hi_\pi$, there exists a symmetric neighbourhood $U$ of the identity in $G$ so that
	$$ \|\pi(h)\pi(s)\xi-\pi(s)\xi\|<\frac{\epsilon}{2}$$
	for all $h\in U^2\cap H$ and $s\in K$. Further, since $K$ is compact and $\supp\,e_\alpha\to \{e\}$, there exists an index $\alpha_0$ so that $\supp\big(\tau(s)\,e_\alpha\big)\subset U$ for all $s\in K$ and $\alpha\geq \alpha_0$.
	We will show that this index $\alpha_0$ produces the desired result.
	
	%By Observation \ref{obs}, there exists a symmetric neighbourhood $U$ of the identity in $G$ so that
	%$$ \|\pi(h)\pi(s_2)\xi-\pi(s_2)\xi\|<\frac{\epsilon}{2} $$
	%for all $h\in K$ and $s\in U^2\cap H$. Further, there exists an index $\alpha_0$ so that $\supp\big(\tau(s)\,e_\alpha\big)\subset U$ for $\alpha\geq \alpha_0$. We will show that this index $\alpha_0$ produces the desired result.
	
	Let $\alpha\geq \alpha_0$. A routine calculation shows that
	$$ f_\alpha(s^{-1}g)=\int_{H}\delta_H^G(h^{-1})^2\tau(s)e_\alpha(gh)\pi(h)\pi(s)\xi\,dh.$$
	If $\tau(s)e_\alpha(gh)\neq 0$ for some $h\in H$, then $g=g_1h_1$ for some $g_1\in U$ and $h_1\in H$. Hence,
	\begin{eqnarray*}
	f_{\alpha}(s^{-1}g) &=& \delta_H^G(h_1)^2\pi(h_1^{-1})f(s^{-1}g_1)\\
	&=&\delta_H^G(h_1)^2\pi(h_1^{-1})\int_{U^2\cap H}\delta_H^G(h^{-1})^2\tau(s)e_\alpha(g_1h)\pi(h)\pi(s)\xi\,dh.
	\end{eqnarray*}
	Since $ \|\pi(h)\pi(s)\xi-\pi(s)\xi\|<\frac{\epsilon}{2}<\sqrt{2} $ for every $h\in U^2\cap H$,
	$$ \|\sgn\,f_\alpha(s^{-1}g)-\pi(h_1)\pi(s)\xi\|<\frac{\epsilon}{2} $$
	for all $s\in K$ which satisfy $f_{\alpha}(s^{-1}g)\neq 0$. Thus,
	$$ \big|\lla \sgn\,x_{\alpha}(s^{-1}g),\sgn\,x_{\alpha}(g)\rra-\pi_{\xi,\xi}(s)\big|<\epsilon$$
	for all $s\in K$ and $g\in G$ which satisfy $x_{\alpha}(g)\neq 0$ and $x_\alpha(s^{-1}g)\neq 0$.
	\end{claimproof}

	\begin{claim} \label{norm claim}
		Let $K$ be a compact subset of $H$. Then
		\begin{itemize}
			\item[(a)] $$\|x_\alpha\|=\left(\int_{G/H} \frac{1}{\rho(g)}\|x_\alpha(g)\|^2\,d\dot g\right)^{1/2}\to 1.$$
			\item[(b)] $$\int_{G/H}\frac{1}{\rho(g)}\big| \|x_{\alpha}(s^{-1}g)\|-\|x_\alpha(g)\|\big|^2\,d\dot g\to 0$$ uniformly for $s\in K$.
		\end{itemize}
		%$$ \int_{G/H}\frac{1}{\rho(g)}\|x_\alpha(s^{-1}g)\|\|x_\alpha(g)\|\,d\dot g \to 1 $$
		%uniformly for $s\in K$.
	\end{claim}

\begin{claimproof}
	Without loss of generality, we may assume that $e\in K$. Choose a symmetric neighbourhood $U$ of the identity in $G$ so that
	$$\|\pi(h)\pi(s)\xi-\pi(s)\xi\|<\frac{\epsilon}{3}$$
	for all $s\in K$ and $h\in U^2\cap H$, and an index $\alpha_0$ so that $\supp\,\tau(s)e_\alpha\subset U$ for all $s\in K$ and $\alpha\geq \alpha_0$.
	
	Fix $\alpha\geq \alpha_0$ and $g\in G$. If $g=g_1h_1$ for some $g_1\in U$ and $h_1\in H$, then
	\begin{eqnarray*}
	&&\left\|\pi(h_1)f_\alpha(s^{-1}g)-\int_{H}\delta_H^G(h^{-1})^2\tau(s)e_\alpha(gh)\pi(s)\xi\,dh\right\| \\
	&=& \left\|\delta_H^G(h_1)^2 f_\alpha(s^{-1}g_1)-\delta_H^G(h_1^2)\int_{H}\delta_H^G(h^{-1})^2\tau(s)e_\alpha(g_1h)\pi(s)\xi\,dh\right\|\\
	&=& \left\|\delta_H^G(h_1^2)\int_{U^2\cap H}\delta_H^G(h^{-1})^2\tau(s)e_\alpha(g_1h)\big(\pi(h)\pi(s)\xi-\pi(s)\xi\big)\,dh\right\|\\
	&\leq&\frac{\epsilon}{3}\cdot\delta_H^G(h_1)^2\int_{H}\delta_H^G(h^{-1})^2\tau(s)e_\alpha(g_1h)\,dh\\
	&=& \frac{\epsilon}{3}\int_{H}\delta_H^G(h^{-1})^2\tau(s)e_\alpha(gh)\,dh.
	\end{eqnarray*}
	In particular, this implies that
	\begin{equation}\label{1}
	\left| \|f_\alpha(s^{-1}g)\|-\int_H \delta_H^G(h^{-1})^2\tau(s)e_\alpha(gh)\,dh\right|<\frac{\epsilon}{3}\int_H \delta_H^G(h^{-1})^2\tau(s)e_\alpha(gh)\,dh.
	\end{equation}
	Since
	$$\int_{G/H}\frac{1}{\rho(g)}\int_H\delta_H^G(h^{-1})^2\tau(s)e_\alpha(gh)\,dh\,d\dot g=1,$$
	it follows immediately that
	$$ \|x_\alpha\|^2=\int_{G/H}\frac{1}{\rho(g)}\|f_\alpha(s^{-1}g)\|\,dg\to 1. $$
	This establishes part (a) of the claim.

	Next observe that
	\begin{eqnarray*}
	&&\int_{G/H}\frac{1}{\rho(g)}\left|\int_{H}\delta_H^G(h^{-1})^2\tau(s)e_\alpha(gh)\,dh-\int_H\delta_H^G(h^{-1})^2e_\alpha(gh)\,dh\right|d\dot g\\
	&\leq& \int_{G/H}\frac{1}{\rho(g)}\int_H \delta_H^G(h^{-1})^2\big|\tau(s)e_\alpha(gh)-e_\alpha(g)\big|\,dh\,d\dot g\\
	&=& \|\tau(s)e_\alpha-e_\alpha\|_1
	\end{eqnarray*}
	Since $\|\tau(s)e_\alpha-e_\alpha\|_1\to 0$ uniformly for $s\in K$, by replacing $\alpha_0$ with a higher index if necessary, we may assume that $\|\tau(s)e_\alpha-e_\alpha\|_1<\frac{\epsilon}{3}$ for all $s\in K$ and $\alpha\geq \alpha_0$. Combining this with Equation \ref{1} gives
	$$ \int_{G/H}\frac{1}{\rho(g)}\big|\|f_\alpha(s^{-1}g)\|-\|f_\alpha(g)\|\big|\,d\dot g\leq\epsilon.$$
	Hence,
	$$\int_{G/H} \frac{1}{\rho(g)}\big|\|x_\alpha(s^{-1}g)\|-\|x_\alpha(g)\|\big|^2\,d\dot g\leq \int_{G/H} \frac{1}{\rho(g)}\big|\|x_\alpha(s^{-1}g)\|^2-\|x_\alpha(g)\|^2\big|\,d\dot g\to 0$$
	uniformly for $s\in K$. This establishes part (b) of the claim.
\end{claimproof}	
	
	\begin{claim}
		Let $K$ be a compact subset of $H$. Then $(\Ind_H^G\pi)_{x_\alpha,x_\alpha}(h)\to \pi_{\xi,\xi}(s)$ uniformly for $s\in K$.
	\end{claim}
\begin{claimproof}
	We begin by observing that if $s\in H$, then
	\begin{eqnarray*}
	(\Ind_H^G\pi)_{x_{\alpha},x_\alpha}(s) &=& \int_{G/H} \frac{1}{\rho(g)}\lla x_{\alpha}(s^{-1}g),x_{\alpha}(g)\rra d\dot g\\
	&=&\int_{G/H} \frac{1}{\rho(g)}\|x_\alpha(s^{-1}g)\|\|x_\alpha(g)\|\lla \sgn\, x_\alpha(s^{-1}g),\sgn\,x_\alpha(g)\rra\,d\dot g.
	\end{eqnarray*}
	By applying Claims \ref{sgn claim} and \ref{norm claim} to this equation, we conclude that $(\Ind_H^G\pi)_{x_\alpha,x_\alpha}(h)\to \pi_{\xi,\xi}(s)$ uniformly for $s\in K$.
\end{claimproof}\\
\end{proof}

%\begin{remark}
%	This proof actually shows that a slightly more general result than the statement of Theorem \ref{main thm} holds. Indeed, it shows that if we simply assume that $G$ is a locally compact group for which $\mr L^1(G)$ has a bounded approximate identity which is {\it quasi-central relative to} $H$, i.e., a bounded approximate identity $\{e_\alpha\}\subset \mr L^1(G)$ so that $\|\tau(s)e_\alpha-e_\alpha\|\to 0$ uniformly for compact subsets of $H$, then $\pi$ is weakly contained in $(\Ind_H^G \pi)|_H$ for every representation $\pi$ of $G$.
%\end{remark}

\section{Local properties of group C*-algebras}

In 1976 Wasserman gave the first example of a C*-algebra which is not exact.
\begin{theorem}[Wasserman \cite{wa}]\label{wass}
	The sequence
	$$ 0\to \mr C^*(\F_2)\otimes_{\min}J\to \mr C^*(\F_2)\otimes_{\min}\mr C^*(\F_2)\to \mr C^*(\F_2)\otimes_{\min} C^*_r(\F_2) \to 0$$
	is not exact, where $J$ is the kernel of the canonical map $\mr C^*(\F_2)\to\mr C^*_r(\F_2)$.
\end{theorem}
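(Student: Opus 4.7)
My plan is to argue by contradiction, assuming the sequence is exact and deriving a contradiction from the nonamenability of $\F_2$ via Fell's absorption principle. Exactness would mean the canonical surjection induces an isometric isomorphism
$$\frac{\mr C^*(\F_2)\otimes_{\min}\mr C^*(\F_2)}{\mr C^*(\F_2)\otimes_{\min}J}\cong \mr C^*(\F_2)\otimes_{\min}\mr C^*_r(\F_2).$$

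The central object is the ``tracial diagonal'' state $\omega$ on $\mr C^*(\F_2)\otimes_{\max}\mr C^*(\F_2)$ arising from the commuting pair $(\lambda,\rho)$, namely $\omega(x)=\langle\pi(x)\delta_e,\delta_e\rangle$, where $\pi\fn \mr C^*(\F_2)\otimes_{\max}\mr C^*(\F_2)\to \B(\ell^2(\F_2))$ is determined by $u_g\otimes u_h\mapsto \lambda(g)\rho(h)$. A direct computation gives $\omega(u_g\otimes u_h)=[g=h]$, so $\omega$ extends the trivial character of the diagonal $\Delta\F_2\subseteq \F_2\times\F_2$; moreover $\omega$ annihilates elementary tensors $a\otimes j$ with $j\in J$, since $\rho$ factors through the quotient $q$.

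Under the exactness hypothesis, the restriction of $\omega$ to $\mr C^*(\F_2)\otimes_{\min}\mr C^*(\F_2)$ would descend to a state on $\mr C^*(\F_2)\otimes_{\min}\mr C^*_r(\F_2)$. Such states are integrals of matrix coefficients of product representations $\pi_1\otimes\pi_2$ with $\pi_1$ arbitrary and $\pi_2$ weakly contained in $\lambda$. Restriction to $\Delta\F_2$ would then realize the trivial character of $\F_2$ as an integral of diagonal matrix coefficients $g\mapsto\langle(\pi_1(g)\otimes\pi_2(g))\xi,\xi\rangle$; but by Fell's absorption any such $\pi_1\otimes\pi_2$ is weakly equivalent to a multiple of the regular representation, so the trivial character would be weakly contained in $\lambda$, contradicting the nonamenability of $\F_2$.

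The hard part is showing that the restriction of $\omega$ to the min tensor product is genuinely bounded, i.e.\ that $\omega$ is expressible in the min-integral form invoked above. One quantitative route is to sidestep boundedness by computing norms of specific sphere sums $x_n=\sum_{g\in S_n}u_g\otimes u_g$: slicing against the trivial character on the first tensor factor gives $\|x_n\|_{\min}\geq|S_n|\asymp 3^n$, while Fell's absorption identifies $\|(\id\otimes q)(x_n)\|_{\min}$ with $\|\sum_{g\in S_n}\lambda(g)\|_{\mr C^*_r(\F_2)}$, which is $O(n\cdot 3^{n/2})$ by Haagerup's inequality for free groups. This exponential-versus-polynomial gap must be reconciled with the proposed isometric descent of the min quotient onto $\mr C^*(\F_2)\otimes_{\min}\mr C^*_r(\F_2)$, and the required approximation of each $x_n$ by elements of $\mr C^*(\F_2)\otimes_{\min}J$ within polynomial error fails --- as can be certified by testing against $\omega$, closing the loop.
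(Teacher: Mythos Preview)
The paper does not prove this statement; Theorem \ref{wass} is cited from Wasserman \cite{wa} and used as a black box, so there is no proof here to compare against. I will therefore assess your argument on its own.

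You have the right objects in hand, but the proposal has a genuine gap: the final paragraph is circular. You correctly identify the ``hard part'' as showing that $\omega$ is bounded on $\mr C^*(\F_2)\otimes_{\min}\mr C^*(\F_2)$, and then claim to sidestep this via the norm discrepancy $\|x_n\|_{\min}\asymp 3^n$ versus $\|(\id\otimes q)(x_n)\|_{\min}=O(n\,3^{n/2})$. But that discrepancy alone says nothing about exactness---it only shows $\id\otimes q$ is not isometric, which is trivial for a nonzero quotient map. What is needed is a lower bound on the \emph{quotient} norm $\|x_n+\mr C^*(\F_2)\otimes_{\min}J\|$, and your proposed certificate for that is ``testing against $\omega$,'' which presupposes exactly the min-boundedness of $\omega$ you were trying to avoid. (Slicing by the trivial character on one factor does not help either: it only yields $\mr{dist}_{\min}(x_n,\mr C^*(\F_2)\otimes_{\min}J)\geq \|\sum_{g\in S_n}\lambda(g)\|$, the same quantity as the Fell-absorption upper bound.)

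The missing ingredient is that $\omega$ \emph{is} min-bounded, and this is where residual finiteness of $\F_2$ enters---an idea absent from your sketch. Choose finite-index normal subgroups $N_k\lhd\F_2$ with $\bigcap_k N_k=\{e\}$, let $\pi_k$ be the (finite-dimensional) regular representation of $\F_2/N_k$ pulled back to $\F_2$, and form the states
\[
\omega_k(u_g\otimes u_h)=\frac{1}{[\F_2:N_k]}\,\mr{tr}\big(\pi_k(h^{-1}g)\big)=[\,h^{-1}g\in N_k\,],
\]
which arise from the tensor representations $\pi_k\otimes\bar\pi_k$ and are therefore bounded on $\mr C^*(\F_2)\otimes_{\min}\mr C^*(\F_2)$. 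Since $\omega_k(u_g\otimes u_h)\to [g=h]=\omega(u_g\otimes u_h)$, any weak$^*$ cluster point of $(\omega_k)$ is a state on the min tensor product agreeing with $\omega$ on the group elements; thus $\omega$ is min-continuous. With this established, your first two paragraphs go through: $\omega$ vanishes on $\mr C^*(\F_2)\otimes_{\min}J$ (because $\rho$ factors through $\mr C^*_r(\F_2)$), so under the exactness hypothesis it would descend to a state $\bar\omega$ on $\mr C^*(\F_2)\otimes_{\min}\mr C^*_r(\F_2)$, forcing $|S_n|=|\bar\omega((\id\otimes q)(x_n))|\leq \|(\id\otimes q)(x_n)\|_{\min}=O(n\,3^{n/2})$, a contradiction.
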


\noindent In 1985 Effros and Haagerup proved the following two results in \cite{eh}.

\begin{theorem}[Effros-Haagerup {\cite[Proposition 5.3]{eh}}]

If $A$ is a locally reflexive C*-algebra, then the sequence
$$0 \to J \otimes_{\min} C \to A \otimes_{\min} C \to A/J \otimes_{\min}  C \to 0$$
is exact for every closed two-sided ideal $J$ of $A$ and every C*-algebra $C$.
\end{theorem}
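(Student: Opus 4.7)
The plan is to prove the only nontrivial inclusion: $\ker(\pi \otimes \mr{id}_C) \subseteq J \otimes_{\min} C$, where $\pi : A \to A/J$ denotes the quotient map. My strategy is to lift the question to the bidual. Via the central projection $p \in Z(A^{**})$ with $pA^{**} = J^{**}$, one has the von Neumann algebra decomposition $A^{**} = J^{**} \oplus (A/J)^{**}$, which, since the min norm is compatible with finite direct sums of C*-algebras, gives
$$A^{**} \otimes_{\min} C \;=\; \bigl(J^{**} \otimes_{\min} C\bigr) \;\oplus\; \bigl((A/J)^{**} \otimes_{\min} C\bigr).$$
Injectivity of the min norm provides isometric embeddings $A \otimes_{\min} C \hookrightarrow A^{**} \otimes_{\min} C$ and $(A/J) \otimes_{\min} C \hookrightarrow (A/J)^{**} \otimes_{\min} C$ compatible with the quotient maps, and a diagram chase then places any $x \in \ker(\pi \otimes \mr{id}_C)$ in the $J^{**} \otimes_{\min} C$ summand. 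Notably, no local reflexivity has been used up to this point.

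The substantive step is the \emph{descent}: showing $(A \otimes_{\min} C) \cap (J^{**} \otimes_{\min} C) = J \otimes_{\min} C$ inside $A^{**} \otimes_{\min} C$. I would employ local reflexivity in its equivalent form as property C'' (see \cite[Proposition 18.15]{pisier}): the canonical map $A \otimes_{\min} D^{**} \to (A \otimes_{\min} D)^{**}$ is injective for every C*-algebra $D$. Combined with an approximate identity $(e_\lambda) \subseteq J$, each product $(e_\lambda \otimes 1_C) x$ lies automatically in $J \otimes_{\min} C$, so it would suffice to prove $(e_\lambda \otimes 1_C) x \to x$ in the min-norm whenever $x \in (A \otimes_{\min} C) \cap (J^{**} \otimes_{\min} C)$. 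I would approximate $x$ by an algebraic tensor $y = \sum_i a_i \otimes c_i$; because $x \in J^{**} \otimes_{\min} C$ satisfies $((1-p) \otimes 1_C)\, x = 0$, the element $((1-p) \otimes 1_C)\, y = \sum_i (1-p) a_i \otimes c_i$ has min-norm bounded by $\|y - x\|_{\min}$ inside $A^{**} \otimes_{\min} C$. Up to a small error one may therefore replace each $a_i$ by $p a_i \in J^{**}$, and property C'' is used to transport this approximation, originally living in $A^{**} \otimes_{\min} C$, back into a controllable statement about $(e_\lambda \otimes 1_C)\, y$ inside $A \otimes_{\min} C$.

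The main obstacle is exactly this descent: a priori the net $(e_\lambda)$ converges to $p$ only in the weak* topology of $A^{**}$, not in norm, so one cannot simply replace the approximate identity by the projection and pass to a limit. Making the convergence $e_\lambda (p a_i) \to p a_i$ effective in the min-norm on the algebraic tensor $y$, uniformly across the finite collection $\{a_i\}$, is the delicate point, and is precisely where local reflexivity's bidual-compatibility feature is indispensable; without property C'' there is no mechanism to translate weak* control in $A^{**}$ into norm control in $A \otimes_{\min} C$.
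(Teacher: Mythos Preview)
The paper does not supply a proof of this statement; it is quoted as \cite[Proposition~5.3]{eh} and used as a black box. So there is nothing in the paper to compare your argument against, and what follows is an assessment of your proposal on its own terms.

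Your overall architecture is the standard one, but two points need repair. First, your formulation of property~C$''$ is inverted: local reflexivity is equivalent to the statement that the canonical map
\[
A^{**}\otimes_{\min} D\;\longrightarrow\;(A\otimes_{\min} D)^{**}
\]
is isometric for every C*-algebra $D$ (see \cite[Proposition~18.15]{pisier}), not the map $A\otimes_{\min} D^{**}\to (A\otimes_{\min} D)^{**}$ that you wrote. The former is exactly the form you need here, since your element lives in $A^{**}\otimes_{\min} C$.

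Second, the descent step is left unfinished, and the approximate-identity route you sketch runs into precisely the obstacle you name: $e_\lambda\to p$ only weak*, which does not by itself yield norm convergence of $(e_\lambda\otimes 1)x$. There is a cleaner way to close the argument. Using the correct form of C$''$, embed $A^{**}\otimes_{\min} C$ isometrically into $(A\otimes_{\min} C)^{**}$. Under this embedding $J^{**}\otimes_{\min} C$ lands inside $(J\otimes_{\min} C)^{**}$, because on elementary tensors the map is separately weak*--weak* continuous and $(J\otimes_{\min} C)^{**}$ is weak*-closed. Hence your element $x$ lies in
\[
(A\otimes_{\min} C)\;\cap\;(J\otimes_{\min} C)^{**}
\]
inside $(A\otimes_{\min} C)^{**}$. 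Now invoke the general fact that for any closed ideal $I\trianglelefteq B$ one has $B\cap I^{**}=I$ in $B^{**}$ (the central projection onto $I^{**}$ annihilates $b\in B$ exactly when its image in $B/I$ vanishes). This gives $x\in J\otimes_{\min} C$ directly, with no approximate identities and no delicate limits.
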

\begin{theorem}[Effros-Haagerup {\cite[Theorem 3.2]{eh}}]
Let $B$ be a C*-algebra and $J$ a closed two sided ideal of $B$. If $A:=B/J$ has the local lifting property, then the sequence
$$0 \to  J \otimes _{\min} C \to B \otimes _{\min} C \to B/J \otimes _{\min} C \to 0$$
is exact for every C*-algebra $C$.
\end{theorem}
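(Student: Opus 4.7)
The plan is to deduce middle-term exactness from a single lifting lemma that extracts exactly what the LLP of $A = B/J$ provides. The minimal tensor product preserves injections, giving the inclusion $J \otimes_{\min} C \hookrightarrow B \otimes_{\min} C$ for free. Let $\pi\fn B\to B/J$ denote the quotient. The induced $*$-homomorphism $\pi\otimes\id_C\fn B \otimes_{\min} C \to (B/J) \otimes_{\min} C$ is surjective because its image is closed and contains the dense subspace $(B/J)\odot C$, and it plainly vanishes on $J \otimes_{\min} C$, giving a canonical surjection $\bar q\fn (B \otimes_{\min} C)/(J \otimes_{\min} C) \twoheadrightarrow (B/J) \otimes_{\min} C$. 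The whole task thus reduces to showing $\ker(\pi\otimes\id_C) = J \otimes_{\min} C$, equivalently that $\bar q$ is isometric.

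The key step is the following lifting lemma: for every $x \in (B/J)\odot C$ there exists $\tilde x \in B\odot C$ with $(\pi\otimes\id_C)(\tilde x) = x$ and $\|\tilde x\|_{\min} \leq \|x\|_{(B/J) \otimes_{\min} C}$. To prove it, I would assume $B$ is unital (otherwise first pass to unitizations). Let $E \subset B/J$ be the finite-dimensional operator system spanned by the $B/J$-components of $x$ together with the unit. Apply the LLP of $B/J$ to the ucp map $\id\fn B/J \to B/J$, regarded as factoring through the quotient $B \twoheadrightarrow B/J$: this produces a ucp map $\psi\fn E \to B$ with $\pi \circ \psi = \id_E$. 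Complete positivity of $\psi$ guarantees that $\psi \otimes \id_C\fn E \otimes_{\min} C \to B \otimes_{\min} C$ is completely contractive, while the inclusion $E \hookrightarrow B/J$ is a complete isometry, so $\|x\|_{E \otimes_{\min} C} = \|x\|_{(B/J) \otimes_{\min} C}$. The element $\tilde x := (\psi \otimes \id_C)(x) \in B\odot C$ then does the job.

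With the lemma in hand, I would finish by a standard approximation. Given $x \in \ker(\pi\otimes\id_C)$ and $\epsilon > 0$, choose $x_0 \in B\odot C$ with $\|x - x_0\|_{\min} < \epsilon$, so that $y_0 := (\pi\otimes\id_C)(x_0) \in (B/J)\odot C$ has $\|y_0\|_{\min} < \epsilon$. The lemma yields $\tilde y_0 \in B\odot C$ with $(\pi\otimes\id_C)(\tilde y_0) = y_0$ and $\|\tilde y_0\|_{\min} < \epsilon$. Then $x_0 - \tilde y_0$ is an algebraic tensor mapping to zero in $(B/J)\odot C$; re-expressing in a basis of its $C$-factors shows its $B$-components lie in $J$, so $x_0 - \tilde y_0 \in J\odot C \subset J \otimes_{\min} C$. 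Consequently $x$ lies within $2\epsilon$ of the closed ideal $J \otimes_{\min} C$, forcing $x \in J \otimes_{\min} C$, which is the desired equality of kernels.

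The main obstacle I anticipate is the operator-algebraic bookkeeping underlying the two key inputs: (i) reducing to the unital case by carefully unitizing $B$ and checking that $\widetilde{B}/J \cong \widetilde{B/J}$ with LLP transferring as required, and (ii) verifying that a ucp map tensored with the identity remains contractive for $\otimes_{\min}$, which relies on the standard extension principle for completely positive maps across the minimal tensor product. Both facts are treated thoroughly in \cite{bo,pisier}, but they are the real technical engines powering an otherwise transparent three-step argument.
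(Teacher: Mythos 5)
The paper offers no proof of this statement: it is imported verbatim as \cite[Theorem 3.2]{eh}, so there is no internal argument to compare yours against. Judged on its own, your proof is correct and is essentially the standard argument (the one in Effros--Haagerup and in \cite[Ch.~3]{bo}): reduce middle-exactness to the claim that $\ker(\pi\otimes\id_C)=J\otimes_{\min}C$, prove a norm-controlled local lifting of algebraic tensors by applying the LLP to the identity map of $B/J$ viewed as a map into the quotient $B/J$ of $B$, and then run the $2\epsilon$-approximation using that the algebraic kernel of $\pi\odot\id_C$ is $J\odot C$ (linear independence of the $C$-legs) and that $J\otimes_{\min}C$ is closed. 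The two technical inputs you flag are exactly the right ones and both hold: a ucp map $\psi\fn E\to B$ induces a (completely contractive) ucp map $\psi\otimes\id_C$ on minimal tensor products, and injectivity of $\otimes_{\min}$ gives $\|x\|_{E\otimes_{\min}C}=\|x\|_{(B/J)\otimes_{\min}C}$ for $E\subset B/J$ a finite-dimensional operator system. For the non-unital reduction, note that one should prove exactness for $\widetilde B$ in place of $B$ (using $\widetilde B/J\cong\widetilde{B/J}$) and then intersect the resulting kernel identity with $B\otimes_{\min}C$; this is routine but worth writing out, since the lift $\tilde x$ a priori lands in $\widetilde B\odot C$ rather than $B\odot C$. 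No gaps otherwise.
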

\noindent %Combining these results of Haagerup and Effros with Theorem \ref{wass}, we have that $\mr C^*(\F_2)$ is not locally reflexive and $\mr C^*_r(\F_2)$ does not admit the LLP.
Consequently, $\mr C^*(\F_2)$ is not locally reflexive and $\mr C^*_r(\F_2)$ does not admit the LLP.

We will now show that the analogue of Theorem $\ref{wass}$ holds when $\F_2$ is replaced with a QSIN group $G$ containing a closed copy of $\F_2$. As an immediate consequence, we will have that $\mr C^*(G)$ is not locally reflexive and $\mr C^*_r(G)$ does not admit the LLP. Before proceeding to the proof, we note that a very simple argument achieves this result when $G$ is discrete.

\begin{remark}\label{local remark}
Let $G$ be a discrete group containing a copy of $\F_2$. Then  $\mr C^*(G)$ is not locally reflexive since local reflexivity passes to subspaces.

Next let $P$ be the orthogonal projection from $\ell^2(G)$ onto $\ell^2(\F_2)$ and observe that $\Phi\fn \mr C^*_r(G)\to \mr C^*_r(\F_2)$ defined by $\Phi(a)=PaP$ is a conditional expectation. Thus if $\phi\fn \mr C^*_r(\F_2)\to B$ is any ucp map into a C*-algebra $B$, then $\tilde{\phi}:=\phi\circ\Phi$ is a ucp map from $\mr C^*_r(G)$ to $B$ which extends $\phi$. This implies that if $\mr C^*_r(G)$ had the LLP, then so would $\mr C^*_r(\F_2)$. Since $\mr C^*_r(\F_2)$ does not have the LLP, we conclude that neither does $\mr C^*_r(G)$.
\end{remark}

\begin{lemma}\label{ind lemma}
	Let $G_1$ and $G_2$ be groups, and $H_1$ a closed subgroup of $G_1$. If $\pi$ is a representation of $H_1\times G_2$, then
	\begin{enumerate}
		\item[(a)] $(\Ind_{H_1\times G_2}^{G_1\times G_2}\pi)|_{G_1}$ is unitarily equivalent to $\Ind_{H_1}^{G_1} (\pi|_{H_1})$,
		\item[(b)] $(\Ind_{H_1\times G_2}^{G_1\times G_2}\pi)|_{G_2}$ is quasi-contained in $\pi|_{G_2}$.
	\end{enumerate}
\end{lemma}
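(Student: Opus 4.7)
The plan is to construct an explicit bijective unitary intertwiner from which both parts follow. Define $T\fn \mc F(G_1 \times G_2, \pi) \to \mc F(G_1, \pi|_{H_1})$ by $(Tf)(g_1) := f(g_1, e)$. Using the $(H_1 \times G_2)$-covariance of $f$ at elements $(e, g_2)$, for which the $\delta$-factor is trivial because the two $\Delta_{G_2}$-contributions cancel, one recovers $f$ via $f(g_1, g_2) = \pi(e, g_2^{-1})(Tf)(g_1)$; in particular $\|f(g_1, g_2)\| = \|Tf(g_1)\|$ for all $g_2 \in G_2$.

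For part (a), I would verify three things. First, $Tf$ lies in $\mc F(G_1, \pi|_{H_1})$: the $H_1$-covariance reduces to the identity $\delta^{G_1 \times G_2}_{H_1 \times G_2}(h_1, e) = \delta^{G_1}_{H_1}(h_1)$, which is immediate from $\Delta_{G_1 \times G_2} = \Delta_{G_1} \Delta_{G_2}$, and compact support modulo $H_1$ follows from the natural identification $(G_1 \times G_2)/(H_1 \times G_2) \cong G_1/H_1$. Second, $T$ is a bijection with inverse $\phi \mapsto \big((g_1, g_2) \mapsto \pi(e, g_2^{-1}) \phi(g_1)\big)$. Third, $T$ is isometric: evaluating $\langle f_1, f_2 \rangle$ with a product test function $\psi(g_1, g_2) := \psi_1(g_1) \psi_2(g_2)$ normalized so $\int_{G_2} \psi_2 \, dg_2 = 1$, Fubini together with the identity $\|f(g_1, g_2)\| = \|Tf(g_1)\|$ yields $\langle f_1, f_2 \rangle = \langle Tf_1, Tf_2 \rangle$. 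Since $T$ manifestly intertwines the left $G_1$-actions, its unitary extension $\overline T$ implements the equivalence claimed in (a).

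For part (b), I would use the same $\overline T$ and compute the transported $G_2$-action: for $s \in G_2$,
\[
(T\,\Ind_{H_1 \times G_2}^{G_1 \times G_2} \pi(e, s) f)(g_1) = f(g_1, s^{-1}) = \pi(e, s)(Tf)(g_1).
\]
Thus the $G_2$-action on $\overline{\mc F}(G_1, \pi|_{H_1})$ is pointwise multiplication by $(\pi|_{G_2})(s)$. This is well-defined because $H_1$ and $G_2$ commute in $H_1 \times G_2$, so $\pi(e, s)$ commutes with each $\pi(h_1^{-1}, e)$ appearing in the $H_1$-covariance condition. Choosing a Borel cross-section for $G_1 \to G_1/H_1$ gives a unitary $\overline{\mc F}(G_1, \pi|_{H_1}) \cong L^2(G_1/H_1) \otimes \Hi_\pi$ under which this action becomes $\id \otimes (\pi|_{G_2})$, an amplification of $\pi|_{G_2}$, proving (b). The main technical hurdle I expect is the isometry verification in (a), where one must carefully match the $\psi$-formulations of the two inner products; beyond that the argument is structural, and the essential observation underlying (b) is simply the commutation of $H_1$ and $G_2$ inside $H_1 \times G_2$.
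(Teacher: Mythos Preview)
Your proof is correct. For part (a) you and the paper do essentially the same thing: your $T$ is the inverse of the paper's intertwiner $U$, defined by $(U\phi)(g_1,g_2)=\pi(e,g_2^{-1})\phi(g_1)$, and the verifications line up.

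For part (b) your route differs from the paper's and is somewhat cleaner. The paper does not transport the $G_2$-action through $\overline T$; instead it takes the standard dense family of vectors $x\in\mc F(G_1\times G_2,\pi)$ built from $f\in\mr C_{\mr c}(G_1)$ and $\xi\in\Hi_\pi$, computes the diagonal coefficient $(\Ind\pi)_{x,x}$ on $\{e\}\times G_2$ explicitly, and identifies it as $\pi_{\xi,\eta}|_{G_2}$ for a concrete $\eta$, thereby obtaining $\mr A_{(\Ind\pi)|_{G_2}}\subset \mr A_{\pi|_{G_2}}$. Your argument sidesteps this calculation: once the transported $G_2$-action is seen to be pointwise application of $\pi|_{G_2}$, a Borel cross-section realizes it as $\id_{L^2(G_1/H_1)}\otimes(\pi|_{G_2})$, an amplification on the nose. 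The paper's approach yields an explicit description of the matrix coefficients, while yours makes the quasi-containment structurally transparent without computation; both are valid.
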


\begin{proof}
	(a) Fix a quasi-invariant measure $d\dot g_1$ on $G_1/H_1$. This becomes a quasi-invariant measure on $(G_1\times G_2)/(H_1\times G_2)$ when we identify $(G_1\times G_2)/(H_1\times G_2)$ with $G_1/H_1$.  It is straightforward to verify that $U\fn \mc F(G_1,\pi|_{H_1})\to \mc F(G_1\times G_2,\pi)$ defined by
	$$ (Uf)(g_1,g_2)=\pi(e,g_2^{-1})f(g_2) $$
	is a surjective intertwining isometry.
	
	(b) Let $f\in \mr C_{\mr c}(G_1)$ and $\xi\in\Hi_\pi$. Then $x\fn G_1\times G_2\to \Hi_\pi$ defined by
	$$ x(g_1,g_2)=\pi(e,g_2^{-1})\int_{H_1}\delta_{H_1}^{G_1}(h_1^{-1})f(g_1h_1) \pi(h_1,e)\xi\,dh_1$$
	is in $\mc F(G,\Hi_\pi)$ by the identification of $\mc F(G_1,\pi|_{H_1})$ with $\mc F(G_1\times G_2,\pi)$ established above. A straightforward calculation shows that
	$$(\Ind_{H_1\times G_2}^{G_1\times G_2}\pi)_{x,x}(e,g_2)=\pi_{\xi,\eta}(e,g_2)$$
	where
	$$ \eta=\int_{G_1/H_1}\int_{H_1}\int_{H_1} \delta_{H_1}^{G_1}(hh')^{-1} \overline{f(g_1h)}f(g_1h')\pi(h^{-1}h')\xi\,dh\,dh'\,d\dot g_1.$$
	In particular $(\Ind_{H_1\times G_2}^{G_1,G_2}\pi)_{x,x}|_{G_2}\in \mr A_{\pi|_{G_2}}$. Since the linear span of functions with the form of $x$ (as $f$ ranges over $\mr C_{\mr c}(G_1)$ and $\xi$ over $\Hi_\pi$) is a dense subspace of $\overline{\mc F}(G_1\times G_2,\pi)$ (see \cite[Lemma 2.24]{kt}), it follows that $\mr A_{(\Ind\,\pi)|_{G_2}}\subset \mr A_{\pi|_{G_2}}$. Thus $(\Ind_{H_1\times G_2}^{G_1\times G_2}\pi )|_{G_2}$ is quasi-contained in $\pi|_{G_2}$.
\end{proof}

Recall that $\mr L^1(G\times G)=\mr L^1(G)\otimes_\gamma\mr L^1(G)$, where $\otimes_\gamma$ denotes the Banach space projective tensor product. As such, we may view C*-completions of tensor products of group C*-algebras of $G$ as being C*-completions of $\mr L^1(G\times G)$. This is used implicitly in the following theorem when we identify $*$-representations of tensor products of group C*-algebras of $G$ with representations of $G\times G$.

\begin{theorem}
	Let $G$ be a QSIN group which contains $\F_2$ as a closed subgroup. Then the sequence
	\begin{equation}\label{QSIN seq}
	0\to \mr C^*(G)\otimes_{\min}K\to \mr C^*(G)\otimes_{\min}\mr C^*(G)\to \mr C^*(G)\otimes_{\min} C^*_r(G) \to 0
	\end{equation}
	is not exact, where $K$ is the kernel of the canonical map $\mr C^*(G)\to \mr C_r^*(G)$.
\end{theorem}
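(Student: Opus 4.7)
The plan is to argue by contradiction: assume the sequence (\ref{QSIN seq}) is exact, and use Theorem \ref{main thm} applied to the closed subgroup $\F_2\times\F_2$ of the QSIN group $G\times G$ (products of QSIN groups are QSIN) together with Lemma \ref{ind lemma} to transfer Wasserman's failure of exactness for $\F_2$ to $G$, contradicting Theorem \ref{wass}.

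By Theorem \ref{wass}, one can find $x\in\mr C^*(\F_2)\odot\mr C^*(\F_2)$, taken in the form $x=\sum c_{i,j}(\delta_{g_i}\otimes\delta_{h_j})\in\ell^1(\F_2\times\F_2)$, satisfying
$$\|x\|_{\mr C^*(\F_2)\otimes_{\min}\mr C^*(\F_2)/\mr C^*(\F_2)\otimes_{\min}J}>\|x\|_{\mr C^*(\F_2)\otimes_{\min}\mr C_r^*(\F_2)}.$$
Fixing a bounded approximate identity $\{e_\alpha\}\subset\mr L^1(G)$ with support shrinking to $\{e\}$, I lift $x$ to $x_\alpha:=\sum c_{i,j}(L_{g_i}e_\alpha\otimes L_{h_j}e_\alpha)\in\mr L^1(G\times G)\subset\mr C^*(G)\otimes_{\min}\mr C^*(G)$. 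The goal is to establish analogous strict inequalities for $x_\alpha$ in the large-$\alpha$ limit, contradicting the assumed exactness for $G$.

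For the upper bound $\limsup_\alpha \|x_\alpha\|_{\mr C^*(G)\otimes_{\min}\mr C_r^*(G)}\leq\|x\|_{\mr C^*(\F_2)\otimes_{\min}\mr C_r^*(\F_2)}$, I exploit the quasi-equivalence $\lambda_G|_{\F_2}\sim\lambda_{\F_2}$ arising from the canonical decomposition $\mr L^2(G)\cong\ell^2(\F_2)\otimes\mr L^2(\F_2\backslash G)$ on which $\F_2$ acts by $\lambda_{\F_2}\otimes 1$. For any commuting pair $(\sigma_1,\sigma_2)$ of representations of $G$ with $\sigma_2\prec\lambda_G$, $\sigma_2|_{\F_2}\prec\lambda_{\F_2}$, so a standard BAI-limit argument gives $\|(\sigma_1\otimes\sigma_2)(x_\alpha)\|\to\|(\sigma_1|_{\F_2}\otimes\sigma_2|_{\F_2})(x)\|\leq\|x\|_{\mr C^*(\F_2)\otimes_{\min}\mr C_r^*(\F_2)}$; a universal-representation argument upgrades this pointwise bound to the required uniform estimate.

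For the lower bound, Theorem \ref{main thm} is essential. Given representations $\pi_1,\pi_2$ of $\F_2$ with $\pi_2\prec\lambda_{\F_2}$, set $\tilde\pi_i:=\Ind_{\F_2}^G\pi_i$. Transitivity of induction gives $\Ind_{\F_2}^G\lambda_{\F_2}=\lambda_G$, and continuity of induction with respect to weak containment yields $\tilde\pi_2\prec\lambda_G$, so the commuting pair $\tilde\pi_1\otimes\tilde\pi_2$ factors through the target quotient. An induction-in-stages application of Lemma \ref{ind lemma} identifies $\Ind_{\F_2\times\F_2}^{G\times G}(\pi_1\otimes\pi_2)$ with $\tilde\pi_1\otimes\tilde\pi_2$, and Theorem \ref{main thm} applied to $\F_2\times\F_2\subset G\times G$ gives $\pi_1\otimes\pi_2\prec(\tilde\pi_1\otimes\tilde\pi_2)|_{\F_2\times\F_2}$. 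A BAI-limit argument then gives $\|(\pi_1\otimes\pi_2)(x)\|\leq\liminf_\alpha\|x_\alpha\|_{\mr C^*(G)\otimes_{\min}\mr C^*(G)/\mr C^*(G)\otimes_{\min}K}$. The main obstacle is that the $\mr C^*(\F_2)\otimes_{\min}\mr C^*(\F_2)/\mr C^*(\F_2)\otimes_{\min}J$-norm may require non-product subrepresentations of $\pi_1\otimes\pi_2$ to compute, whereas the commuting-pair argument only handles product representations; passing the lower bound from the $\mr C_r^*$-norm to the potentially larger quotient norm is the crucial technical step, which I expect to follow from applying Theorem \ref{main thm} to invariant subspaces of $\pi_1\otimes\pi_2$ together with continuity properties of induction on such subrepresentations.
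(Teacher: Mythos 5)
Your overall architecture --- argue by contradiction, apply Theorem \ref{main thm} to $\F_2\times\F_2$ inside the QSIN group $G\times G$, use Lemma \ref{ind lemma} to control restrictions of induced representations, and use Wasserman's theorem as the source of non-exactness --- matches the paper's. But there is a genuine gap, and you have located it yourself in your final sentence: every representation you actually handle is a product $\pi_1\otimes\pi_2$ of representations of the two factors of $\F_2\times\F_2$, and the supremum of $\|(\pi_1\otimes\pi_2)(x)\|$ over such pairs with $\pi_2\prec\lambda_{\F_2}$ computes only $\|x\|_{\mr C^*(\F_2)\otimes_{\min}\mr C^*_r(\F_2)}$ --- the \emph{smaller} of the two norms that Wasserman separates. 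The quotient norm on $\big(\mr C^*(\F_2)\otimes_{\min}\mr C^*(\F_2)\big)/\big(\mr C^*(\F_2)\otimes_{\min}J\big)$ is realized by the single canonical representation $\sigma$ of $\F_2\times\F_2$ on that quotient, which is weakly contained in $\pi_u\times\pi_u$ but is \emph{not} a tensor product of representations of the factors; inducing product representations and then passing to invariant subspaces does not obviously recover it, and ``continuity properties of induction on such subrepresentations'' is a hope rather than an argument. So the lower bound --- which is the entire content of the theorem --- is not established.

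The repair is to stop working element-by-element and induce $\sigma$ itself: set $\omega:=\Ind_{\F_2\times\F_2}^{G\times G}\sigma$. Since $\sigma\prec\pi_u\times\pi_u$ and $\Ind_{\F_2\times\F_2}^{G\times G}(\pi_u\times\pi_u)=\Ind_{\F_2}^G\pi_u\times\Ind_{\F_2}^G\pi_u$, the representation $\omega$ extends to $\mr C^*(G)\otimes_{\min}\mr C^*(G)$; by induction in stages together with Lemma \ref{ind lemma}(b), $\omega|_{\{e\}\times G}$ is quasi-contained in $\Ind_{\F_2}^{G}(\sigma|_{\{e\}\times G})$, which is weakly equivalent to $\Ind_{\F_2}^G\lambda_{\F_2}=\lambda_G$, so $\omega$ annihilates $\mr C^*(G)\otimes_{\min}K$. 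Exactness of (\ref{QSIN seq}) would then force $\omega\prec\pi_{u,G}\times\lambda_G$, hence $\omega|_{\F_2\times\F_2}\prec\pi_u\times\lambda_{\F_2}$ by the Herz restriction theorem applied to the second factor, while Theorem \ref{main thm} gives $\sigma\prec\omega|_{\F_2\times\F_2}$ --- contradicting Wasserman's statement that $\sigma$ is not weakly contained in $\pi_u\times\lambda_{\F_2}$. This single representation-theoretic argument replaces your BAI-lifted elements $x_\alpha$ and both of your limit estimates, which become unnecessary.
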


%We pause to note that we may view C*-completions of $\mr C^*(G)\odot \mr C^*_r(G)$ as being C*-completions of $\mr L^1(G\times G)$ since $\mr L^1(G\times G)=\mr L^1(G)\otimes_\gamma \mr L^1(G)$ where $\otimes_\gamma$ denotes the Banach space projective tensor product. Hence there is a one-to-one correspondence between $*$-representations of and ... This fact will be used implicitly during the following proof.
%Recall that $\mr L^1(G\times G)=\mr L^1(G)\otimes_\gamma\mr L^1(G)$

\begin{proof}
	Wasserman's result above is equivalent to the statement that the quotient map
	$$\big(\mr C^*(\F_2)\otimes_{\min}\mr C^*(\F_2)\big)/\big(\mr C^*(\F_2)\otimes_{\min} J\big)\to \mr C^*(\F_2) \otimes_{\min}\mr C^*_r(\F_2)$$
	is not injective. Let $\sigma\fn \F_2\times\F_2\to \big(\mr C^*(\F_2)\otimes_{\min}\mr C^*(\F_2)\big)/\big(\mr C^*(\F_2)\otimes_{\min} J\big)$ be the canonical map, $\pi_{u}\fn \F_2\to \B(\Hi_u)$ be the universal representation of $\F_2$, and $\lambda_{\F_2}\fn \F_2\to \mc B(\ell^2(\F_2))$ the left regular representation of $\F_2$. Then $\pi_{u}\times\lambda_{\F_2}$ does not weakly contain $\sigma$. This will be used in our proof that sequence 2 is not exact.
	%We will show that the sequence
	%\begin{equation}\label{QSIN seq}
	%0\to \mr C^*(G)\otimes_{\min}K\to \mr C^*(G)\otimes_{\min}\mr C^*(G)\to \mr C^*(G)\otimes_{\min} C^*_r(G) \to 0
	%\end{equation}
	%is not exact, where $K$ is the kernel of the canonical quotient $\mr C^*(G)\to \mr C^*_r(G)$.
	
	Consider the representation $\omega:=\Ind_{\F_2\times \F_2}^{G\times G}\sigma$. Observe that $\omega$ is weakly contained in $\Ind_{\F_2}^G\pi_u\times \Ind_{\F_2}^G\pi_u$ since $\sigma$ is weakly contained in $\Ind_{\F_2\times\F_2}^{G\times G}(\pi_u\times\pi_u)$ and $\Ind_{\F_2\times\F_2}^{G\times G}(\pi_u\times\pi_u)=\Ind_{\F_2}^G\pi_u\times \Ind_{\F_2}^G\pi_u$ (see \cite[Theorem 2.53]{kt}). So $\omega$ extends to a $*$-representation of $\mr C^*(G)\otimes_{\min}\mr C^*(G)$.
	Notice that $\omega|_{\{e\}\times G}$ is weakly contained in $\lambda_G$ since
	$$(\Ind_{\F_2\times \F_2}^{G\times G}\sigma)|_{\{e\}\times G}=\big(\Ind_{\F_2\times G}^{G\times G}(\Ind_{\F_2\times \F_2}^{\F_2\times G}\sigma)\big)|_{\{e\}\times G}$$
	is quasi-contained in $\Ind_{\F_2}^{G}(\sigma|_{\{e\}\times G})$ by Lemma \ref{ind lemma} and $\Ind_{\F_2}^{G}(\sigma|_{\{e\}\times G})$ is weakly equivalent to $\Ind_{\F_2}^G \lambda_{\F_2}=\lambda_G$, where $\lambda_G$ is the left regular representation of $G$. So the kernel of the representation $\omega$ when applied to $\mr C^*(G)\otimes_{\min} \mr C^*(G)$ contains $\mr C^*(G)\otimes_{\min} K$.
	%Further observe that, by combining Theorem \ref{main thm} with Lemma \ref{ind lemma}, it follows that $\omega|_{\{e\}\times\F_2}$ is weakly equivalent to $(\Ind_{\F_2}^G \lambda)|_{\F_2}=\lambda$, and $\omega|_{\F_2\times\{e\}}$ is weakly equivalent to the universal representation $\pi_u$ of $\F_2$. So the kernel of $\omega$ when applied to $\mr C^*(G)\otimes_{\min}\mr C^*(G)$ contains $\mr C^*(G)\otimes_{\min} K$.
	If sequence \ref{QSIN seq} were exact, then $\omega$ would be weakly contained in $\pi_{u,G}\times \lambda_G$, where $\pi_{u,G}$ is the universal representation of $G$. Then $\omega|_{\F_2\times \F_2}$ would be weakly contained in $(\pi_{u,G}\times \lambda_G)|_{\F_2\times \F_2}$, which is weakly contained in $\pi_u\times \lambda_{\F_2}$ since $\pi_{u,G}|_{\F_2}$ is quasi-contained in $\pi_{u}$ and $\lambda_{G}|_{\F_2}$ is quasi-equivalent to $\lambda_{\F_2}$ by the Herz restriction theorem (see \cite{herz}). %So $\omega|_{\F_2}$ is weakly contained in $\pi_{u}\times\lambda_{\F_2}$.
	This, however, is a contradiction since $\omega|_{\F_2\times\F_2}$ weakly contains $\sigma$ by Theorem \ref{main thm} and $\sigma$ is not weakly contained in $\omega|_{\F_2}$.
	Thus sequence 2 is not exact.
\end{proof}

\begin{corollary}
	If $G$ is a QSIN group which contains $\F_2$ as a closed subgroup, then $\mr C^*(G)$ is not locally reflexive and $\mr C^*_r(G)$ does not have the LLP.
\end{corollary}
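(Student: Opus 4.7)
The plan is to obtain both assertions as immediate contradictions from the preceding theorem combined with the two Effros--Haagerup results recalled at the start of Section 4. A single non-exact six-term sequence is enough to obstruct both properties simultaneously, since each of local reflexivity of $\mr C^*(G)$ and the LLP for $\mr C^*_r(G)$ would, via one of the Effros--Haagerup theorems, force the sequence displayed in the preceding theorem to be exact.

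First I would handle local reflexivity by contradiction. If $\mr C^*(G)$ were locally reflexive, then Effros--Haagerup's Proposition 5.3, applied with $A=\mr C^*(G)$, ideal $J=K$, and auxiliary C*-algebra $C=\mr C^*(G)$, would yield exactness of
$$0\to K\otimes_{\min}\mr C^*(G)\to \mr C^*(G)\otimes_{\min}\mr C^*(G)\to \mr C^*_r(G)\otimes_{\min}\mr C^*(G)\to 0.$$
The canonical flip isomorphism $X\otimes_{\min}Y\cong Y\otimes_{\min}X$ identifies this with the sequence shown non-exact in the preceding theorem, so $\mr C^*(G)$ cannot be locally reflexive.

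For the LLP statement I would argue similarly. Since $\mr C^*_r(G)=\mr C^*(G)/K$, if $\mr C^*_r(G)$ had the LLP then Effros--Haagerup's Theorem 3.2, taking $B=\mr C^*(G)$, $J=K$, and $C=\mr C^*(G)$, would again force the sequence in the preceding theorem to be exact (after a flip, as above), contradicting what has just been proved.

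I anticipate no real obstacle beyond correctly bookkeeping the two applications. All of the representation-theoretic and induced-representation work has already been absorbed into the non-exactness theorem, and what remains is purely a direct consequence of the quoted Effros--Haagerup results together with symmetry of the minimal tensor product.
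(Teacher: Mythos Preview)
Your proposal is correct and matches the paper's intended argument: the corollary is stated without proof precisely because it follows immediately from the preceding non-exactness theorem together with the two Effros--Haagerup results quoted earlier in the section, exactly as you outline. The flip-isomorphism bookkeeping you mention is the only detail to track, and you have handled it correctly.
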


\section{(Weak) Extensions of Fourier(-Stieltjes) spaces}

Let $G$ be a locally compact group and $H$ a closed subgroup of $G$. We say that Fourier space $\mr A_\pi$ of $H$ {\it extends} to a Fourier space of $G$ if there exists a Fourier space $\mr A_\sigma$ of $G$ such that $\mr A_\sigma|_{H}=\mr A_\pi$. Equivalently, $\mr A_\pi$ extends to a Fourier space of $G$ if and only if there exists a representation $\sigma$ of $G$ such that $\sigma|_H$ is quasi-equivalent to $G$ if and only if $\mr{VN}_\pi= \mr{VN}_{\sigma|_H}$ canonically. Similarly we say that a Fourier-Stieltjes space $\mr B_\pi$ of $H$ {\it weakly extends} to a Fourier-Stieltjes space of $G$ if there exists a Fourier-Stiejtes space $\mr B_\sigma$ of $G$ such that $\mr B_\sigma|_H$ is a weak* dense subspace of $\mr B_\pi$. Equivalently, $\mr B_\pi$ weakly extends to a Fourier-Stieltjes space of $G$ if and only if there exists a representation $\sigma$ of $G$ such that $\sigma|_H$ is weakly equivalent to $\pi$ if and only if $\mr C^*_{\pi}=\mr C^*_{\sigma|_H}$ canonically.

In this section we will look at when a Fourier(-Stieltjes) space $\mr A_\pi$ (resp. $\mr B_\pi$) of a closed subgroup $H$ of a locally compact group $G$ (weakly) extends to a Fourier(-Stieltjes) space of $G$. We will focus our attention when $\mr A_\pi$ (resp. $\mr B_\pi$) is  an ideal of the Fourier-Stieltjes algebra $\mr B(H)$. The majority of well studied Fourier and Fourier-Stieltjes spaces fall into this category. The special case of the extension problem when the Fourier space $\mr A_\pi$ is taken to be the Fourier algebra $\mr A(H)$ or the Fourier-Stieltjes algebra $\mr B(H)$ are already well studied. The Herz restriction theorem immediately implies that the Fourier algebra $\mr A(H)$ always extends to a Fourier space of $G$ for every locally compact group $G$, and a previously mentioned result of Cowling and Rodway gives that the Fourier-Stieltjes algebra $\mr B(H)$ extends to a Fourier space of $G$ whenever $G$ is a SIN group.

\begin{theorem}[Herz Restriction Theorem \cite{herz}]
	Let $H$ be a closed subgroup of a locally compact group $G$. Then $\mr A(G)|_H=\mr A(H)$.
\end{theorem}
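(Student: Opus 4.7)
The plan is to reduce the theorem to a statement about quasi-equivalence of representations. Since $\mr A(G) = \mr A_{\lambda_G}$ where $\lambda_G$ is the left regular representation, the background identity $\mr A_\pi|_H = \mr A_{\pi|_H}$ gives $\mr A(G)|_H = \mr A_{\lambda_G|_H}$. Because $\mr A(H) = \mr A_{\lambda_H}$, and containment of Fourier spaces is equivalent to quasi-containment of the underlying representations, it suffices to prove that $\lambda_G|_H$ and $\lambda_H$ are quasi-equivalent.

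The natural way to establish this is to exhibit $\lambda_G|_H$ as an amplification of $\lambda_H$. Fix a Bruhat measurable cross-section $\sigma\fn H\backslash G \to G$ for the right coset space, a compatible quasi-invariant measure $d\mu$ on $H\backslash G$, and a continuous positive rho-function $\rho$ for $(G,H)$. The associated Weil-type disintegration of Haar measure then yields a unitary $U\fn \mr L^2(G) \to \mr L^2(H) \otimes \mr L^2(H\backslash G)$, where $U$ is built out of the parameterization $(h,\dot g)\mapsto h\,\sigma(\dot g)$ of $G$, corrected by $\rho$ as a Jacobian factor so as to be isometric. Since left multiplication by $h_0 \in H$ sends $h\,\sigma(\dot g)$ to $(h_0 h)\,\sigma(\dot g)$, the $H$-action is by left translation on the first factor of $H \times (H\backslash G)$ and is trivial on the second. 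Hence $U$ intertwines $\lambda_G|_H$ with $\lambda_H \otimes I_{\mr L^2(H\backslash G)}$, realizing $\lambda_G|_H$ as an amplification of $\lambda_H$ and therefore quasi-equivalent to it.

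The main obstacle is the careful bookkeeping of modular functions $\Delta_G$ and $\Delta_H$ when $G$ or $H$ is nonunimodular: the rho-function correction must be inserted precisely so that $U$ is isometric and the intertwining relation holds without spurious cocycles. A conceptually cleaner alternative is to invoke induction in stages, $\lambda_G = \Ind_{\{e\}}^G 1 = \Ind_H^G \lambda_H$, and then extract the decomposition directly from the Mackey realization on $\overline{\mc F}(G,\lambda_H)$, using the covariance $f(gh) = \delta_H^G(h)\lambda_H(h^{-1})f(g)$ to parameterize sections over a cross-section. This approach avoids explicit Bruhat sections at the cost of more abstract machinery, but ultimately performs the same computation and exposes the amplification structure of $\lambda_G|_H$.
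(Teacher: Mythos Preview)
The paper does not give its own proof of this theorem; it is quoted from Herz \cite{herz} as a known result and used as a black box. Your argument is correct and is the standard modern route: the background identity $\mr A_\pi|_H=\mr A_{\pi|_H}$ reduces the statement to the quasi-equivalence of $\lambda_G|_H$ and $\lambda_H$, and the measurable disintegration $\mr L^2(G)\cong \mr L^2(H)\otimes \mr L^2(H\backslash G)$ (with the rho-function absorbed into the unitary) exhibits $\lambda_G|_H$ as $\lambda_H\otimes I$, hence as an amplification of $\lambda_H$. The modular-function bookkeeping you flag is genuine but routine, and the alternative via induction in stages, $\lambda_G=\Ind_H^G\lambda_H$, is equally valid. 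Indeed, the paper itself invokes Herz's theorem precisely in this quasi-equivalence form when it asserts that $\lambda_G|_{\F_2}$ is quasi-equivalent to $\lambda_{\F_2}$, so your reformulation matches exactly how the result is consumed here.
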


\begin{theorem}[Cowling-Rodway \cite{cr}] \label{cr2}
	Let $H$ be a closed subgroup of a SIN group $G$. Then $\mr B(G)|_H=\mr B(H)$.
\end{theorem}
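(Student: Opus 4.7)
The plan is to derive this as a direct corollary of Theorem \ref{CR}, using the compatibility of Fourier spaces with quasi-containment and restriction that was recalled in the background section.

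First I would dispatch the easy inclusion $\mr B(G)|_H \subseteq \mr B(H)$: any $u \in \mr B(G)$ has the form $\pi_{\xi,\eta}$ for some representation $\pi$ of $G$, and $u|_H = (\pi|_H)_{\xi,\eta}$ is manifestly a matrix coefficient of a representation of $H$, hence lies in $\mr B(H)$. No hypothesis on $G$ is used here.

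For the substantive direction $\mr B(H) \subseteq \mr B(G)|_H$, I would argue locally: it suffices to show that for every representation $\pi\fn H \to \B(\Hi_\pi)$, the Fourier space $\mr A_\pi$ is contained in $\mr B(G)|_H$, because $\mr B(H) = \bigcup_\pi \mr A_\pi$. Fix such a $\pi$ and set $\sigma := \Ind_H^G \pi$, a representation of $G$. By Theorem \ref{CR} (the SIN-group quasi-containment result of Cowling--Rodway), $\pi$ is quasi-contained in $\sigma|_H$. Invoking the equivalence recalled in the Fourier/Fourier-Stieltjes background (quasi-containment $\Leftrightarrow$ inclusion of Fourier spaces) gives $\mr A_\pi \subseteq \mr A_{\sigma|_H}$. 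Now apply the restriction identity $\mr A_\sigma|_H = \mr A_{\sigma|_H}$ (valid for any representation $\sigma$ of $G$) to obtain
$$ \mr A_\pi \;\subseteq\; \mr A_{\sigma|_H} \;=\; \mr A_\sigma|_H \;\subseteq\; \mr B(G)|_H, $$
which is the required containment.

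There is no real obstacle once Theorem \ref{CR} is in hand: the whole argument is essentially bookkeeping translating the representation-theoretic statement into a statement about function algebras. The only conceptual step is recognizing that the two standard facts from the background section — that quasi-containment is detected by inclusion of Fourier spaces, and that restriction of $\mr A_\sigma$ to $H$ equals $\mr A_{\sigma|_H}$ — combine with the SIN-group quasi-containment theorem to give the restriction theorem for $\mr B$ immediately. This is precisely the observation that the introduction attributes to Cowling and Rodway's proof.
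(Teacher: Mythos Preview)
Your proposal is correct and matches exactly the approach indicated by the paper. The paper does not write out a proof of Theorem~\ref{cr2}: it is stated as a result of Cowling and Rodway, and the introduction explains that it ``follows immediately'' from Theorem~\ref{CR}. Your argument is precisely the intended elaboration of that remark---combining Theorem~\ref{CR} with the background facts that quasi-containment corresponds to inclusion of Fourier spaces and that $\mr A_\sigma|_H=\mr A_{\sigma|_H}$.
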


\noindent As a further example of a case which has been studied, Ghandehari has shown that the analogue of Theorem \ref{cr2} holds for the Rajchman algebra $\mr B_0(G)$ (see \cite{ghand}).

We observe that the weak analogue of Theorem \ref{cr2} for the class of QSIN groups is an immediate consequence of Theorem \ref{main thm}.

\begin{corollary}\label{weak CR}
	Let $H$ be a closed subgroup of a QSIN group $G$. Then $\mr B(G)|_H$ is weak* dense in $\mr B(H)$.
\end{corollary}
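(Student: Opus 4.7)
The plan is to deduce the corollary from Theorem \ref{main thm} by testing against the universal representation $\pi_u$ of $H$. Since $\pi_u$ weakly contains every representation of $H$, Theorem \ref{main thm} applied to $\pi = \pi_u$ upgrades to a weak equivalence between $\pi_u$ and $(\Ind_H^G \pi_u)|_H$. Using the correspondence recalled in Section 2.4 that two representations of $H$ have the same Fourier-Stieltjes space if and only if they are weakly equivalent, this gives
\[
\mr B_{(\Ind_H^G \pi_u)|_H} \;=\; \mr B_{\pi_u} \;=\; \mr B(H).
\]

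Next I would apply the identity $\mr A_\sigma|_H = \mr A_{\sigma|_H}$ (valid for $\sigma$ a representation of the ambient group $G$) to $\sigma = \Ind_H^G \pi_u$, obtaining
\[
\mr A_{(\Ind_H^G \pi_u)|_H} \;=\; \mr A_{\Ind_H^G \pi_u}\bigl|_H \;\subset\; \mr B(G)|_H.
\]
Since $\mr B_\tau$ is by definition the weak* closure of $\mr A_\tau$ inside $\mr B(H)$, combining these two facts yields
\[
\mr B(H) \;=\; \mr B_{(\Ind_H^G \pi_u)|_H} \;=\; \overline{\mr A_{\Ind_H^G \pi_u}\bigl|_H}^{\,w^*} \;\subset\; \overline{\mr B(G)|_H}^{\,w^*},
\]
which is exactly the claimed weak* density.

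Given Theorem \ref{main thm}, the argument is essentially a bookkeeping exercise, so there is no serious obstacle to anticipate. The only point requiring a moment of care is that every weak* closure must be understood inside $\mr B(H) = \mr C^*(H)^*$ with its native weak* topology, which is the correct ambient topology for the corollary's conclusion; this is automatic since each $\mr B_\tau$ is by definition a weak*-closed subspace of $\mr B(H)$.
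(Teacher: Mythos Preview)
Your argument is correct and is precisely the fleshing out of what the paper leaves implicit: the paper gives no proof beyond declaring the corollary ``an immediate consequence of Theorem \ref{main thm},'' and your use of $\pi_u$ together with the identities $\mr A_\sigma|_H=\mr A_{\sigma|_H}$ and $\mr B_\tau=\overline{\mr A_\tau}^{w^*}$ from Section 2.4 is exactly the intended route.
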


Let $G=\R\rtimes \R^+$ be the $ax+b$ group and $H$ be the subgroup $\R$. Then $G$ is a QSIN group by virtue of being amenable, but $\mr B(G)|_H=\mr A(\R)\oplus\C1_\R\subsetneq \mr B(\R)$ (see \cite{khalil}). Thus the conclusion of Corollary \ref{weak CR} cannot be replaced with $\mr B(G)|_H=\mr B(H)$. Further, there are also counterexamples to Corollary \ref{weak CR} when we drop the assumption that $G$ is QSIN (see \cite[Lemma 3]{bv}).

We now characterize when an arbitrary Fourier(-Stieltjes) space of a closed subgroup $H$ of a (Q)SIN group $G$ extends to a Fourier(-Stieltjes) space of $G$.

\begin{lemma}
	Let $G$ be a locally compact group, $H$ a closed subgroup of $G$, and $\pi$ a representation of $G$.
	\begin{itemize}
		\item[(a)] Suppose $G$ is a SIN  group and $\mr A_\pi$ is an ideal of $\mr B(H)$. If $\mr A_\pi$ extends to a Fourier space of $G$, then there exists a representation $\sigma$ of $G$ such that $\mr A_\sigma|_H=\mr A_{\pi}$ and $\mr A_\sigma$ is an ideal of $\mr B(G)$.
		\item[(b)] Suppose $G$ is a QSIN group and $\mr B_\pi$ is an ideal of $\mr B(H)$. If $\mr B_\pi$ weakly extends to a Fourier-Stieltjes space of $G$, then there exists a representation of $\sigma$ of $G$ such that $\mr B_\sigma|_H$ is weak* dense subspace of $\mr B_\pi$.
	\end{itemize}
\end{lemma}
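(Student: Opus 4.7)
The plan is to handle both parts uniformly by tensoring the given (weak) extension of $\pi$ with the universal representation of $G$. Let $\rho$ be the representation of $G$ supplied by the hypothesis: in (a), $\rho|_H$ is quasi-equivalent to $\pi$, and in (b), $\rho|_H$ is weakly equivalent to $\pi$. Let $\pi_u$ denote the universal representation of $G$ and set
$$\sigma:=\rho\otimes\pi_u.$$
I will verify that $\sigma|_H\sim\pi$ in the appropriate sense and that the Fourier space $\mr A_\sigma$ (resp.\ the Fourier--Stieltjes space $\mr B_\sigma$) is an ideal of $\mr B(G)$.

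The key auxiliary observation is that $\mr A_{\pi_u}=\mr B(G)$: every $u\in \mr B(G)$ is a matrix coefficient of some representation of $G$, which is quasi-contained in $\pi_u$ by universality, so $u$ is already a matrix coefficient of $\pi_u$. Consequently $\mr A_{\pi_u}$ is trivially a two-sided ideal of $\mr B(G)$, and by the characterization recalled in the Background this forces $\pi_u\otimes\tau\prec_{\tn{quasi}}\pi_u$ for every representation $\tau$ of $G$. Since tensoring with a fixed representation preserves both quasi- and weak containment (via amplifications on the von Neumann algebra side, and via spatial tensor products of the associated $\mr C^*$-algebras followed by restriction from $G\times G$ to the diagonal on the $\mr C^*$-algebra side), we conclude $\sigma\otimes\tau=\rho\otimes\pi_u\otimes\tau\prec_{\tn{quasi}}\rho\otimes\pi_u=\sigma$ in (a); the analogous argument with weak containment gives the corresponding statement in (b). Hence $\mr A_\sigma$ (resp.\ $\mr B_\sigma$) is an ideal of $\mr B(G)$.

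To verify $\sigma|_H\sim\pi$, I would use that $1_G$ is a subrepresentation of $\pi_u$, so $1_H$ is a subrepresentation of $\pi_u|_H$. Writing $\sigma|_H=\rho|_H\otimes\pi_u|_H$, one direction reads
$$\pi\prec\rho|_H\cong\rho|_H\otimes 1_H\hookrightarrow\rho|_H\otimes\pi_u|_H=\sigma|_H,$$
where $\prec$ denotes quasi- or weak containment as appropriate. For the reverse direction, the hypothesis that $\mr A_\pi$ (resp.\ $\mr B_\pi$) is an ideal of $\mr B(H)$ says exactly that $\pi\otimes\tau\prec\pi$ for every representation $\tau$ of $H$; applying this to $\tau=\pi_u|_H$ and combining with $\rho|_H\sim\pi$ yields $\sigma|_H=\rho|_H\otimes\pi_u|_H\sim\pi\otimes\pi_u|_H\prec\pi$. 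Hence $\sigma|_H\sim\pi$. In (a), this gives $\mr A_\sigma|_H=\mr A_{\sigma|_H}=\mr A_\pi$ via the restriction identity $\mr A_\sigma|_H=\mr A_{\sigma|_H}$. In (b), $\mr B_{\sigma|_H}=\mr B_\pi$, and since $\mr A_\sigma|_H=\mr A_{\sigma|_H}$ is weak$^*$ dense in $\mr B_{\sigma|_H}=\mr B_\pi$, the larger subspace $\mr B_\sigma|_H$ is also weak$^*$ dense in $\mr B_\pi$.

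The only conceptually nontrivial step is the universal absorption property $\pi_u\otimes\tau\prec_{\tn{quasi}}\pi_u$, which flows essentially tautologically from $\mr A_{\pi_u}=\mr B(G)$. The remainder is bookkeeping with standard permanence of (quasi/weak) containment under tensor products, subgroup restriction, and inclusion of subrepresentations. Notably, the SIN and QSIN hypotheses do not appear to play an explicit role in the construction itself---they enter only through the hypothesized existence of the initial extension $\rho$---so the main work is simply assembling the permanences in the correct order.
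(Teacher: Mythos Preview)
Your proof is correct and is, modulo language, the same construction as the paper's: the paper forms $E=\mr A_{\sigma'}\cdot\mr B(G)$ and takes its weak* closure, which is exactly $\mr B_{\sigma'\otimes\pi_u}=\mr B_\sigma$ in your notation, and then verifies the ideal and restriction properties on the function-space side rather than via tensor absorption. Your representation-theoretic phrasing makes the choice of $\sigma$ explicit and is arguably cleaner; both arguments correctly note (you explicitly, the paper tacitly) that the SIN/QSIN hypotheses play no role in this lemma.
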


\begin{proof}
	We will prove (b). The proof of (a) is similar but easier.
	
	Suppose that $\sigma'$ be a representation of $G$ such that $\mr B_{\sigma'}|_H$ is a weak* dense subspace of $\mr B_\pi$. Define
	$$ E=\mr A_{\sigma'}\cdot \mr B(G)=\{uv : u\in \mr A_{\sigma'}, v\in \mr B(G)\}.$$
	Then $E$ is a translation invariant ideal of $\mr B(G)$ and, so, the weak* closure of $E$ is equal to some Fourier-Stieltjes space $\mr B_\sigma$ of $G$ where $\mr B_\sigma$ is an ideal of $\mr B(G)$. Next observe that the weak* closure of $E|_H$ in $\mr B(H)$ is $\mr B_\pi$ since $\mr A_{\sigma'}|_H\subset E|_H\subset \mr B_{\pi}$. Thus $\mr B_{\sigma}|_H$ is a weak* dense subset of $\mr B_\pi$.
\end{proof}

\begin{prop}
	Let $G$ be a locally compact group, $H$ a closed subgroup of $G$, and $\pi$ a representation of $G$.
	\begin{enumerate}
		\item[(a)] Suppose $G$ is a SIN group and $\mr A_\pi$ is an ideal of $\mr B(G)$. Then $\mr A_{\Ind_H^G\pi}$ is the smallest Fourier space of $G$ which is an ideal of $\mr B(G)$ and whose restriction to $H$ contains $\mr A_\pi$.
		\item[(b)] Suppose $G$ is a QSIN group and $\mr B_\pi$ is an ideal of $\mr B(G)$. Then $\mr B_{\Ind_H^G\pi}$ is the smallest Fourier-Stieltjes space of $G$ which is an ideal of $\mr B(G)$ and whose restriction to $H$ contains $\mr B_\pi$ in its weak* closure in $\mr B(H)$.
	\end{enumerate}
\end{prop}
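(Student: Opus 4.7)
The plan is to treat both parts in parallel; (b) is the delicate case because one must work with weak*-closures of restrictions, while (a) is cleaner thanks to the identity $\mr A_\sigma|_H = \mr A_{\sigma|_H}$ recorded in the background. I will sketch (b), with (a) following the same pattern using Theorem \ref{CR} in place of Theorem \ref{main thm} and quasi-containment in place of weak containment. Three things must be verified: that $\mr B_{\Ind_H^G \pi}$ is itself an ideal of $\mr B(G)$, that $\mr B_\pi$ lies in the weak*-closure of $\mr B_{\Ind_H^G \pi}|_H$ inside $\mr B(H)$, and that any Fourier-Stieltjes space of $G$ satisfying these conditions already contains $\mr B_{\Ind_H^G \pi}$.

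For the ideal property, I would use that $\mr B_\pi$ being an ideal of $\mr B(H)$ means $\pi \otimes (\sigma|_H)$ is weakly contained in $\pi$ for every representation $\sigma$ of $G$. Since induction preserves weak containment, $\Ind_H^G(\pi\otimes\sigma|_H)$ is weakly contained in $\Ind_H^G \pi$; and the projection formula (Mackey's tensor identity) identifies $\Ind_H^G(\pi \otimes \sigma|_H)$ with $(\Ind_H^G \pi) \otimes \sigma$. The restriction condition is then immediate from Theorem \ref{main thm}: weak containment of $\pi$ in $(\Ind_H^G \pi)|_H$ gives $\mr B_\pi \subset \mr B_{(\Ind_H^G \pi)|_H}$, and the latter is by definition the weak*-closure of $\mr A_{(\Ind_H^G\pi)|_H} = \mr A_{\Ind_H^G\pi}|_H$, which sits inside $\mr B_{\Ind_H^G\pi}|_H$.

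The minimality step is where the real work is. Suppose $\mr B_\tau$ is a Fourier-Stieltjes ideal of $\mr B(G)$ whose restriction to $H$ has weak*-closure containing $\mr B_\pi$. Since $\mr B_\tau|_H \subset \mr B_{\tau|_H}$ and the right-hand side is already weak*-closed in $\mr B(H)$, we deduce $\mr B_\pi \subset \mr B_{\tau|_H}$, i.e., $\pi$ is weakly contained in $\tau|_H$, and induction then yields $\Ind_H^G \pi$ weakly contained in $\Ind_H^G(\tau|_H)$. The key move, and what I expect to be the main obstacle to spot, is to invoke the projection formula a second time with the trivial representation $1_H$ of $H$, producing $\Ind_H^G(\tau|_H) \cong \tau \otimes \Ind_H^G 1_H$. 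Because $\mr B_\tau$ is an ideal of $\mr B(G)$, this latter representation is weakly contained in $\tau$, so chaining the weak containments gives $\Ind_H^G \pi$ weakly contained in $\tau$, i.e., $\mr B_{\Ind_H^G \pi} \subset \mr B_\tau$, as required. Every other step is essentially bookkeeping with weak containment and the definitions of $\mr A_\pi$ and $\mr B_\pi$; the substantive content is packaged into Theorem \ref{main thm} and the projection formula.
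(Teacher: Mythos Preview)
Your proposal is correct and follows essentially the same argument as the paper: both use Theorem~\ref{main thm} to get $\mr B_\pi\subset \overline{\mr B_{\Ind_H^G\pi}|_H}^{w^*}$, both use the projection formula $\Ind_H^G(\tau|_H)\cong \tau\otimes\Ind_H^G 1_H$ together with the ideal hypothesis on the competitor $\mr B_\tau$ for minimality, and both use $\Ind_H^G(\pi\otimes\sigma|_H)\cong(\Ind_H^G\pi)\otimes\sigma$ for the ideal property of $\mr B_{\Ind_H^G\pi}$. The only difference is the order of presentation (you establish the ideal property first, the paper last), and you make explicit the step $\mr B_\tau|_H\subset\mr B_{\tau|_H}$ needed to extract $\pi\prec\tau|_H$, which the paper leaves implicit.
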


\begin{proof}
	We will show (b). The proof of (a) follows by replacing Theorem \ref{main thm} with Theorem \ref{CR} and weak containment by quasi-containment.
	
	By Theorem \ref{main thm}, $\pi$ is weakly contained in $(\mathrm{Ind}_H^G \pi)|_H$. So $\mr B_\pi\subset \mr B_{(\mathrm{Ind}_H^G \pi)|_H}$. Since $\mr A_{(\mathrm{Ind}_H^G \pi)|_H}\subset \mr B_{(\mathrm{Ind}_H^G \pi)}|_H$, we deduce that $\mr B_\pi \subset \overline{\mr B_{\mr{Ind}_H^G\pi}|_H}^{w^*}$.	Next suppose that $\sigma$ is a representation of $G$ such that $\mr B_\sigma$ is an ideal of $\mr B(G)$ and $\overline{(\mr B_\sigma|_H)}^{w^*}\supset \mr B_\pi$. Then $\sigma\otimes \Ind_H^G 1_H\prec \sigma$ since $\mr B_\sigma$ is an ideal of $\mr B(G)$. Further,
	$$\Ind_H^G \pi\prec\Ind_H^G (\sigma|_H)=\Ind_H^G (\sigma|_H\otimes 1_H)=\sigma\otimes \Ind_H^G 1_H  $$
	(see \cite[Theorem 2.58]{kt}) since $\pi\prec \sigma|_H$. Thus $\Ind_H^G\pi\prec \sigma$ or, equivalently, $\mr B_{\mathrm{Ind}_H^G\pi}\subset \mr B_\sigma$. Finally, $\mr B_{\Ind_H^G \pi}$ is an ideal of $\mr B(G)$ since $\sigma\otimes\Ind_H^G \pi=\mathrm{Ind}_H^G(\sigma|_H\otimes \pi)\prec \Ind_H^G \pi$ for every representation $\sigma$ of $G$.

\end{proof}

\begin{corollary}\label{extensions cor}
	Let $G$ be a locally compact group, $H$ a closed subgroup of $G$, and $\pi$ a representation of $H$.
	\begin{enumerate}
		\item[(a)] Suppose $G$ is a SIN group and $\mr A_\pi$ is an ideal of $\mr B(H)$. The following are equivalent.
		\begin{enumerate}
			\item[(i)] $\mr A_\pi$ extends to a Fourier space of $G$;
			\item[(ii)] $\mr A_{\Ind_H^G \pi}|_H\subset \mr A_\pi$;
			\item[(iii)] $\mr A_{\Ind_H^G \pi}|_H= \mr A_\pi$.
		\end{enumerate}
		\item[(b)] Suppose that $G$ is a QSIN group and $\mr B_\pi$ is an ideal of $\mr B(H)$. The following are equivalent.
		\begin{enumerate}
			\item[(i)] $\mr B_\pi$ weakly extends to a Fourier-Stieltjes space of $G$;
			\item[(ii)] $\mr B_{\Ind_H^G \pi}|_H\subset \mr B_\pi$;
			\item[(iii)] $\mr B_{(\Ind_H^G \pi)|_H}= \mr B_\pi$.
		\end{enumerate}
	\end{enumerate}
\end{corollary}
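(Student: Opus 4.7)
The plan is to prove part (b) by a three-step cycle (i) $\Rightarrow$ (ii) $\Rightarrow$ (iii) $\Rightarrow$ (i); part (a) will follow from essentially the same argument after replacing Theorem~\ref{main thm} and weak containment with Theorem~\ref{CR} and quasi-containment throughout. The preceding Lemma and Proposition have already isolated the structural content, so this should largely be a matter of gluing them to Theorem~\ref{main thm}.

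For (i) $\Rightarrow$ (ii), I would begin by invoking the preceding Lemma to upgrade the given weakly extending representation to one, call it $\sigma$, for which $\mr B_\sigma$ is itself an ideal of $\mr B(G)$. Since $\mr B_\pi$ is weak*-closed in $\mr B(H)$ by definition, the weak* density of $\mr B_\sigma|_H$ in $\mr B_\pi$ automatically promotes to the honest inclusion $\mr B_\sigma|_H \subset \mr B_\pi$. The minimality assertion of the preceding Proposition then forces $\mr B_{\Ind_H^G \pi} \subset \mr B_\sigma$, and restricting both sides to $H$ yields (ii).

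For (ii) $\Rightarrow$ (iii), Theorem~\ref{main thm} supplies $\pi \prec (\Ind_H^G \pi)|_H$, giving $\mr B_\pi \subset \mr B_{(\Ind_H^G \pi)|_H}$. Conversely, the background identity $\mr A_{(\Ind_H^G \pi)|_H} = \mr A_{\Ind_H^G \pi}|_H$ together with (ii) places the generators of $\mr B_{(\Ind_H^G \pi)|_H}$ inside $\mr B_\pi$, and taking the weak* closure inside the weak*-closed set $\mr B_\pi$ delivers the reverse inclusion. The step (iii) $\Rightarrow$ (i) is then immediate upon taking $\sigma = \Ind_H^G \pi$: the weak* density of $\mr A_\sigma|_H = \mr A_{\sigma|_H}$ in $\mr B_{\sigma|_H} = \mr B_\pi$ is precisely the witness required by the definition of weak extension.

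I do not anticipate a serious obstacle; the one point worth watching is the routine-but-easy-to-conflate distinction between $\mr B_\sigma|_H$ and $\mr B_{\sigma|_H}$ (and its Fourier-space analogue). This is exactly what makes (i) $\Rightarrow$ (ii) nontrivial and forces the combined use of the Lemma (to rectify an arbitrary witness to an ideal witness) and the Proposition (to exploit the minimality of $\mr B_{\Ind_H^G\pi}$ among ideals of $\mr B(G)$ whose restriction captures $\mr B_\pi$).
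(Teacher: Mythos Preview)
Your proposal is correct and is precisely the argument the paper has in mind: the Corollary is stated without proof, being an immediate consequence of the preceding Lemma (upgrading a weak extension to one by an ideal) and Proposition (minimality of $\mr B_{\Ind_H^G\pi}$ among such ideals), together with Theorem~\ref{main thm}. Your three-step cycle spells this out faithfully, including the care needed to pass between $\mr B_\sigma|_H$ and $\mr B_{\sigma|_H}$.
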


We note that Corollary \ref{extensions cor} need not hold when the condition that $\mr A_\pi$ is simply a subalgebra of $\mr B(H)$. Indeed, suppose that $H$ is a non-normal closed subgroup of a locally compact group $G$. Then $\mr A_{1_H}$ is subalgebra of $\mr B(G)$ which clearly extends to a Fourier space of $G$ despite the fact that $(\Ind_H^G 1_H)|_H$ is not quasi-contained in $1_H$.

We end this paper with an example which shows that not every Fourier(-Stieltjes) space of a closed subgroup $H$ of a (Q)SIN group $G$ (weakly) extends to Fourier(-Stieltjes) space of $G$.

\begin{example}
	Let $G_1$ be any noncompact locally compact group. Then $Z/2\Z$ acts on $G_1\times G_1$ by switching coordinates. Consider the group $G:=(G_1\times G_1)\rtimes (\Z/2\Z)$ with closed subgroup $H:=G_1\times G_1$.
	
	Define a subspace $E$ of the Fourier-Stietjes algebra $\mr B(H)$ by
	$$ E=\big\{u\in\mr B(H) : u|_{G_1\times \{e\}}\in \mr A(G_1\times \{e\})\big\}.$$
	Then $E$ is a norm closed translation invariant ideal of $\mr B(G)$, i.e, $E$ is a Fourier space which is an ideal of $\mr B(H)$. However $E$ does not extend to a Fourier space of $G$. Indeed, if $\mr A_\sigma$ is a Fourier space of $G$ such that $\mr A_\sigma|_H\supset E$, then $\mr A_\sigma|_{G_1\times \{e\}}=\mr B(G_1\times \{e\})\supsetneq \mr A(G_1\times \{e\})$ since $\mr A_\sigma$ is translation invariant and $u\times v\in E$ for all $u\in \mr A(G_1)$ and $v\in \mr B(G_1)$.
	
	The same analysis as above shows that if $G_1$ is any nonamenable group, and $G$ and $H$ are defined as above, then
	$$ E':= u\in \{u\in \mr B(H): u|_{G_1\times \{e\}}\in \mr B_r(G_1\times\{e\})\}$$
	(where $\mr B_r(H)$ denotes the reduced Fourier-Stieltjes algebra of $H$) is a Fourier-Stieltjes space of $H$ which is an ideal in $\mr B(H)$ but does not weakly extend to a Fourier-Stieltjes space of $G$. 
\end{example}

\end{document}